\theoremstyle{plain}
\newtheorem{theorem}{Theorem}[section]
\newtheorem{lemma}[theorem]{Lemma}
\theoremstyle{definition}
\newtheorem{definition}[theorem]{Definition}
\newtheorem{example}[theorem]{Example}
\newtheorem{remark}[theorem]{Remark}
\theoremstyle{remark}
\newcommand\RR{\mathbb{R}}
\newcommand\by{\boldsymbol{y}}
\newcommand\be{\boldsymbol{e}}
\newcommand\bk{\boldsymbol{k}}
\newcommand\bw{\boldsymbol{w}}
\newcommand\bW{\boldsymbol{W}}
\newcommand\bu{\boldsymbol{u}}
\newcommand\bx{\boldsymbol{x}}
\newcommand\bv{\boldsymbol{v}}
\newcommand\bz{\boldsymbol{z}}
\newcommand\bY{\boldsymbol{Y}}
\newcommand\bA{\boldsymbol{A}}
\newcommand\bB{\boldsymbol{B}}
\newcommand{\defi}{\textbf}
        \pgfplotsset{compat=1.16}
        \tikzset{%
        fwdrxn/.style={very thick, arrows={-Stealth[length=5pt,width=5pt]}},
        revrxn/.style={very thick, arrows={-Stealth[length=5pt,width=5pt,left]}},
        newt/.style={turq, opacity=0.15}
        }
        \tikzset{near start abs-right/.style={xshift=1cm}}
        \tikzset{near start abs-left/.style={xshift=-3.5cm}}
        \tikzset{near start abs-up/.style={yshift=1.5cm}}
        \tikzset{near start abs-down/.style={yshift=-1cm}}
    \definecolor{viridisyellow}{RGB}{253,231,36}
    \definecolor{viridisyellowpale}{RGB}{239,223,81}
    \definecolor{viridisgreen}{RGB}{121,209,81}
        \definecolor{hlgreen}{RGB}{16,115,16}
    \definecolor{viridisturq}{RGB}{34,167,132}
    \definecolor{viridisblue}{RGB}{64,67,135}
    \definecolor{viridisviolet}{RGB}{68,1,84}
    \definecolor{magmapink}{RGB}{188,81,119}
    \definecolor{pastelpink}{RGB}{253,191,210}
	\definecolor{ratecnst}{RGB}{172,172,172}
\begin{document}

\title{Weakly reversible single linkage class realizations of polynomial dynamical systems: an algorithmic perspective}

\author[1]{
         Gheorghe Craciun%
}
\author[2]{
        Abhishek Deshpande%
}
\author[3]{
        Jiaxin Jin%
}
\affil[1]{\small Department of Mathematics and Department of Biomolecular Chemistry, University of Wisconsin-Madison}
\affil[2]{Center for Computational Natural Sciences and Bioinformatics, \protect \\
 International Institute of Information Technology Hyderabad}
\affil[3]{\small Department of Mathematics, The Ohio State University}

\date{}

\maketitle

\begin{abstract}

Systems of differential equations with polynomial right-hand sides are very common in applications. In particular, when restricted to the positive orthant, they appear naturally (according to the law of {\em mass-action kinetics}) in ecology, population dynamics, as models of biochemical interaction networks, and models of the spread of infectious diseases. On the other hand, their mathematical analysis is very challenging in general; in particular, it is very difficult to answer questions about the long-term dynamics of the variables (species) in the model, such as questions about persistence and extinction. Even if we restrict our attention to mass-action systems, these questions still remain challenging. On the other hand, if a polynomial dynamical system has a {\em weakly reversible single linkage class} ($W\!R^1$) {\em realization}, then its long-term dynamics is known to be remarkably robust: all the variables are persistent (i.e., no species goes extinct), irrespective of the values of the parameters in the model. Here we describe an algorithm for finding $W\!R^1$ realizations of polynomial dynamical systems, whenever such realizations exist. 
\end{abstract}

\section{Introduction}
\label{sec:intro}

By a system of differential equations with polynomial right-hand sides (or simply a \emph{polynomial dynamical system}), we mean a dynamical system of the form
\begin{equation}\label{eq:poly-intro}
\begin{split}
    \frac{dx_1}{dt} &= p_1(x_1, \ldots, x_n), \\ 
    \frac{dx_2}{dt} &= p_2(x_1, \ldots, x_n), \\ 
                    &\qquad \quad \vdots  \\
    \frac{dx_n}{dt} &= p_n(x_1, \ldots, x_n), \\ 
\end{split}
\end{equation}
where each $p_i(x_1,\ldots, x_n)$ is a polynomial in the variables  $x_1,\ldots, x_n$. In general, such systems are very difficult to analyze due to nonlinearities and feedbacks that may give rise to bifurcations, multiple basins of attraction, oscillations, and even chaotic dynamics. The second part of Hilbert's 16th problem (about the number of limit cycles of polynomial dynamical systems in the plane) is still essentially unsolved, even for {\em quadratic} polynomials~\cite{Ilyashenko2002}. Even the simplest object associated to \eqref{eq:poly-intro}, its steady state set, can give rise to highly nontrivial questions in real algebraic geometry.
 
Polynomial dynamical systems  show up very often as standard models (based on {\em mass-action kinetics}) in biology, chemistry,  population dynamics, infectious disease models, and many other areas of applications. In such models the variables $x_i$  represent populations, concentrations, or other quantities that cannot become negative, so the domain of \eqref{eq:poly-intro} is restricted to the positive orthant.  For example, in a biochemical network we may have the reaction $X_1+X_2 \to X_3$, which consumes $X_1$ and $X_2$ and produces $X_3$; according to mass-action kinetics, this reaction contributes a negative monomial term of the form ``$-kx_1x_2$" on the right-hand side of $\frac{dx_1}{dt}$ and $\frac{dx_2}{dt}$, and a positive monomial term  ``$kx_1x_2$" on the right-hand side of $\frac{dx_3}{dt}$, where $x_1, x_2, x_3$ denote the concentrations of the chemical species $X_1, X_2, X_3$. The  parameter $k$ is called {\em reaction rate constant}.
A {\em reaction network} consists of a set of such reactions, and if we add all these terms for all the reactions in the network (each one with its own reaction rate constant) we obtain  standard dynamical system models for the network. In general, one cannot just rely on numerical simulations to deduce the dynamical properties of these models, because the values of the reaction rate constants cannot usually be estimated accurately.  Therefore, it becomes very important to relate the {\em structural properties} of the reaction network with {\em dynamical properties} that can be generated by it~\cite{yu2018mathematical, CraciunDickensteinShiuSturmfels2009, craciun2008identifiability, pantea2012persistence, gopalkrishnan2014geometric, craciun2020efficient, feinberg1979lectures, feinberg2019foundations}.  


Alternatively, one may start with a system of the form \eqref{eq:poly-intro} obtained from fitting some experimental data, with little or no information on the generating reaction network. In general, if a polynomial dynamical system is generated by some reaction network, then there are actually  infinitely many other networks that also generate it~\cite{craciun2008identifiability}. This {\em lack of unique identifiability} of an underlying network can actually be leveraged to analyze the dynamics of a system of the form \eqref{eq:poly-intro}: if a network with certain properties can be found to generate it, then we may be able to immediately infer its dynamical behavior. 

Some of the most important  questions for polynomial systems \eqref{eq:poly-intro} are related to the {\em long-term dynamics} of its solutions, which is usually analyzed in terms of the mathematical properties of {\em persistence} and {\em permanence}. The property of \emph{persistence} means that no species can ``go extinct", i.e., for any solution $\bx(t)$ of the system, we have $\displaystyle{\lim\inf}_{t\to\infty}\bx_i(t) > 0$ for all species $i$. The (stronger) property of \emph{permanence} means that the system has a globally attracting compact set. 

A class of networks whose long-term dynamics is best understood is the family of  {\em weakly reversible single linkage class networks}~\cite{yu2018mathematical}; here we call them simply ``{\em $W\!R^1$ networks}", and we will refer to polynomial systems \eqref{eq:poly-intro} that have $W\!R^1$ realizations as ``{\em $W\!R^1$ systems}". 
Specifically, $W\!R^1$ systems have been shown to be  persistent and permanent in a very robust way, which even allows for the explicit construction of globally attracting invariant sets~\cite{gopalkrishnan2014geometric, boros2020permanence}.  Moreover, complex balanced $W\!R^1$ systems have been shown to be {\em globally stable}, i.e., they have  a {\em globally attracting point} within each linear invariant subspace~\cite{anderson2011proof}.

Not only are the long-term dynamical properties of $W\!R^1$ systems well understood, but also their persistence and permanence properties hold {\em for any choices of parameter values}, in a sense that will be made clear below. This fact is very important in applications because the exact values of the coefficients in the polynomial right-hand sides of these dynamical systems are often very difficult to estimate accurately. 

In this paper, we describe an efficient algorithm for determining whether a given polynomial dynamical system \eqref{eq:poly-intro} admits a $W\!R^1$ realization, and for finding such a realization whenever it exists. 

\bigskip

\textbf{Structure of the paper.}
In Section~\ref{sec:reaction_networks}, we introduce some basic terminology of reaction networks. 
Primarily, we present the notion of \emph{net reaction vectors}, which play a key role in the main algorithm. 
In Section~\ref{sec:algorithms}, we propose Algorithm~\ref{algorithm 1} to find if there exists a weakly reversible reaction network consisting of a single connected component that generates a given dynamical system. 
In Section~\ref{sec:special case and implementation}, we discuss some special cases of weakly reversible realizations with a single linkage class and go through the steps in Algorithm~\ref{algorithm 1} using several examples.
Moreover, we illustrate how to implement this algorithm in practice.
In Section~\ref{sec:discussion}, we summarize our findings in this paper and outline directions for future work.

\medskip

\textbf{Notation.}
We denote by $\mathbb{R}_{\geq 0}^n$ and $\mathbb{R}_{>0}^n$ the set of vectors in $\mathbb{R}^n$ with non-negative and positive entries respectively. Given two vectors $\bx\in \mathbb{R}_{>0}^n$ and $\by \in \RR^n$, we use the following notation for a monomial with exponents given by $\by$:
\begin{equation} \notag
\bx^{\by} = x_1^{y_{1}} \ldots x_n^{y_{n}},
\end{equation}
where $\bx = (x_1, \ldots, x_n)^{\intercal}$ and $\by = (y_1, \ldots, y_n)^{\intercal}$.

\section{Reaction networks}
\label{sec:reaction_networks}

\begin{definition} 
A \defi{reaction network}, also called a \defi{Euclidean embedded graph (E-graph)}, is a directed graph $G = (V, E)$ in $\RR^n$, where $V \subset \mathbb{R}^n$ is a finite set of \defi{vertices}, $E \subseteq V \times V$ represents the set of \defi{edges}, and such that there are neither self-loops nor isolated vertices in $G$.
We denote the {number of vertices} by $m$, and let $V = \{ \by_1, \ldots, \by_m \}$.  
A directed edge $(\by_i, \by_j) \in E$ represents a \textbf{reaction} in the network, and is also denoted by $\by_i \rightarrow \by_j$. Moreover, we define the \defi{reaction vector} associated with the edge $\by_i \rightarrow \by_j$ as $\by_j - \by_i \in\mathbb{R}^n$. Here $\by_i$ and $\by_j$ denote the \defi{source vertex} and \defi{target vertex} respectively.

\end{definition}

\begin{definition}
Let $G=(V, E)$ be a Euclidean embedded graph.
The \defi{stoichiometric subspace} of $G$ is the vector space spanned by the reaction vectors as follows:
\begin{equation} \notag
S = \rm{span} \{\by' - \by\, |\, \by \rightarrow \by' \in E \}.
\end{equation}
Moreover, for any positive vector $\bx_0 \in\mathbb{R}_{>0}^n$, the affine polyhedron $(\bx_0 + S ) \cap \mathbb{R}^n_{>0 }$ is called the \defi{stoichiometric compatibility class} of $\bx_0$.
\end{definition} 

\begin{definition} 
Let $G=(V, E)$ be a Euclidean embedded graph.

\begin{enumerate}[label=(\alph*)]
\item The set of vertices $V$ is partitioned by its \defi{connected components} (also called \defi{linkage classes}), which correspond to the subset of vertices belonging to that connected component. 

\item A connected component $L \subseteq V$ is \defi{strongly connected}, if every edge is part of an oriented cycle. 
Further, a strongly connected component $L \subseteq V$ is called a \defi{terminal strongly connected component} if for every vertex $\by\in L$ and $\by\rightarrow\by'\in E$, we have $\by'\in L$.

\item $G=(V, E)$ is said to be \defi{weakly reversible}, if every connected component is strongly connected, i.e., every edge is part of an oriented cycle.
\end{enumerate}
\end{definition}

\begin{remark}
For a weakly reversible reaction network $G=(V, E)$, every vertex $\by \in V$ is a source and a target vertex.
Furthermore, every linkage class is a strong linkage
class, as well as a terminal strong linkage class.
\end{remark}

\begin{definition}
Let $G = (V, E)$ be a Euclidean embedded graph, with $m$ vertices and $\ell$ connected components. Suppose the dimension of the stoichiometric subspace $S$ is $s = \dim (S)$, then the \defi{deficiency} of the network $G$ is the non-negative integer defined as follows:
\begin{equation*}    
\delta = m - \ell - s.
\end{equation*}
\end{definition}

\begin{definition}
Consider a Euclidean embedded graph $G=(V, E)$, and denote by $V_S\subseteq V$ the set of source vertices in $G$. 
Then $G$ is said to be \defi{endotactic}~\cite{craciun2013persistence, anderson2020classes,craciun2020endotactic}, if for every $\bv\in\mathbb{R}^n$ and $\by\to \by'\in E$ satisfying $\bv\cdot (\by'-\by)<0$, there exists $\tilde{\by}\to \hat{\by} \in E$, such that 
\begin{equation*}
\bv\cdot (\hat{\by} - \tilde{\by}) > 0, \ \text{and } \ \bv \cdot \tilde{\by} < \bv \cdot \by.
\end{equation*}
Moreover, $G$ is said to be \defi{strongly endotactic}~\cite{gopalkrishnan2014geometric, anderson2020classes}, if for every $\bv \in\mathbb{R}^n$ and $\by\to \by'\in E$ satisfying $\bv\cdot (\by'-\by)<0$, there exists $\tilde{\by} \to \hat{\by} \in E$, such that for every ${\by}^{*} \in V_S$,
\begin{equation*}
\bv \cdot (\hat{\by} - \tilde{\by})>0, \
\bv\cdot \tilde{\by} < \bv\cdot \by, \ \text{and } \
\bv \cdot \tilde{\by} \leq \bv \cdot {\by}^{*}.
\end{equation*}
\end{definition}

\begin{remark}[\cite{anderson2020classes}]
It can be shown that weakly reversible reaction networks are endotactic. Furthermore, if a network is weakly reversible and consists of a single linkage class, then it is strongly endotactic.
\end{remark}

\begin{figure}[H]
\centering
\includegraphics[scale=0.5]{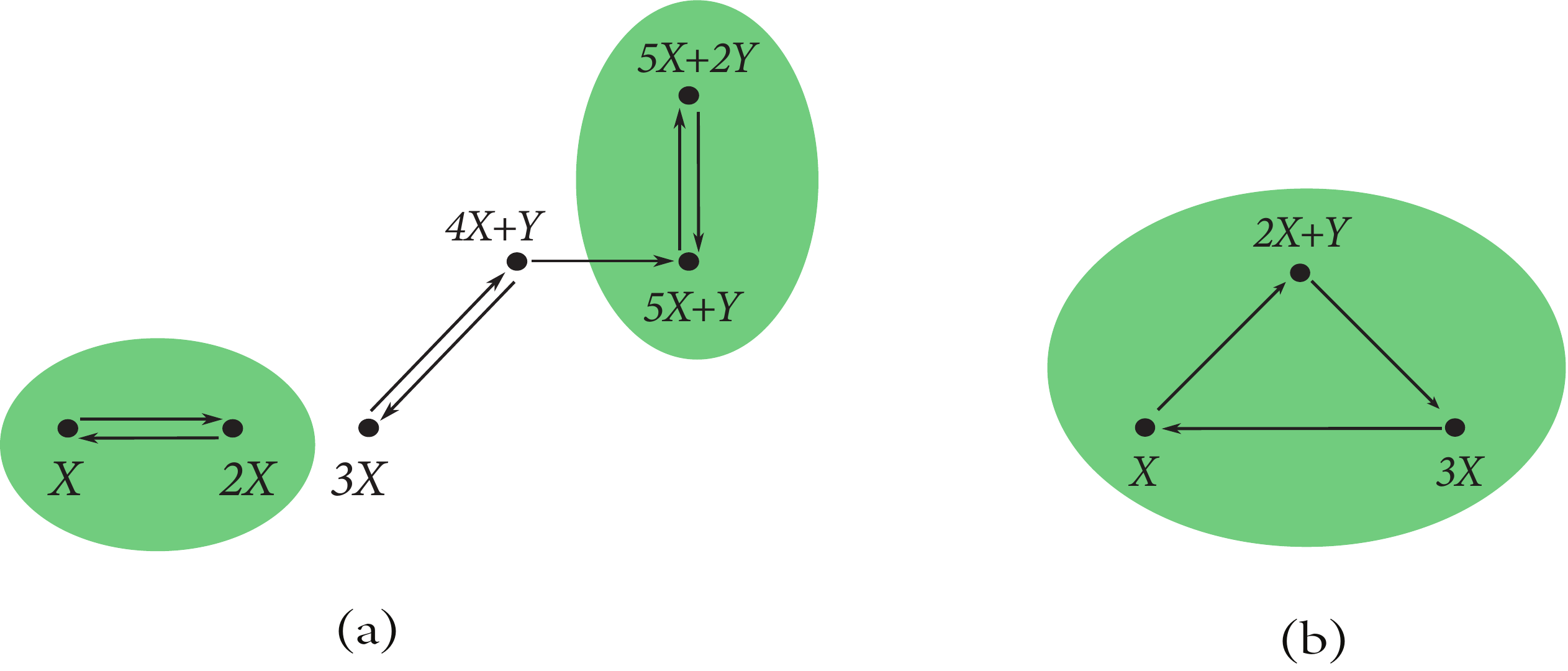}
\caption{\small (a) This reaction network has two linkage classes, and two terminal strongly connected components (shown in the green shaded region).  It has a stoichiometric subspace of dimension 2 and the deficiency $\delta = m - \ell -s = 6- 2- 2=2$. (b) This reaction network is weakly reversible and has one terminal strongly connected component. It has a stoichiometric subspace of dimension 2 and the deficiency $\delta = m - \ell -s = 3- 1-2=0$.}
\label{fig:euclidean_graphs}
\end{figure} 

Figure~\ref{fig:euclidean_graphs} shows two examples of reaction networks. 
A reaction network can generate a wide range of dynamical systems. We are interested in mass-action kinetics, which has been extensively studied in ~\cite{yu2018mathematical,feinberg1979lectures,voit2015150,guldberg1864studies,gunawardena2003chemical,adleman2014mathematics}.

\begin{definition} 
Let $G=(V,E)$ be a Euclidean embedded graph, we denote the \defi{vector of reaction rate constants} by
$\bk = (k_{\by_i \rightarrow \by_j})_{\by_i \rightarrow \by_j \in E} \in \mathbb{R}_{>0}^{E}$, and $k_{\by_i \rightarrow \by_j}$ or $k_{ij}$ is called the \defi{reaction rate constant} on the edge $\by_i \rightarrow \by_j$. 
The \defi{associated mass-action system} generated by $(G, \bk)$ 
on $\RR^n_{>0}$ is given by:
\begin{equation} \label{eq:mass_action}
\frac{\mathrm{d} \bx}{\mathrm{d} t}= \sum_{\by_i \rightarrow \by_j \in E}k_{\by_i \rightarrow \by_j} \bx^{\by_i}(\by_j - \by_i).
\end{equation}
\end{definition}

\begin{definition} \label{def:steady state}
Consider the associated mass-action system generated by 
$(G, \bk)$ in \eqref{eq:mass_action}. A point $\bx^* \in \mathbb{R}_{>0}^n$ is called a \defi{positive steady state} if 
\begin{equation} \label{eq:ss}
\frac{\mathrm{d} \bx}{\mathrm{d} t} 
= \sum_{\by_i \rightarrow \by_j \in E}k_{\by_i \rightarrow \by_j} (\bx^*)^{\by_i}(\by_j - \by_i)
= \mathbf{0}.
\end{equation}
\end{definition}

It is well known that every mass-action system admits a matrix decomposition~\cite{horn1972general}.
Hence, we can illustrate the mass-action system \eqref{eq:mass_action} in the following vectorial representation:
\begin{equation} \label{eq:mass_action_horn_jackson}
\frac{d\bx}{dt} = \bY \bA_{\bk} {\bx}^{\bY},
\end{equation}
where $\bY$ is a matrix whose columns are the vertices, defined as
\begin{equation*}
\bY = (\by_1, \ \by_2, \ \ldots, \ \by_m),
\end{equation*}
and $\bA_{\bk}$ is the negative transpose of the graph Laplacian of $(G, \bk)$, defined as
\begin{equation*}
[\bA_{\bk}]_{ji} =
\begin{cases}
k_{\by_i\rightarrow\by_j}, & \text{if } \by_i\rightarrow\by_j \in E, \\[5pt]
- \sum\limits_{\by_i\rightarrow\by_j \in E} k_{\by_i\rightarrow\by_j}, & \text{if } i=j, \\[5pt]
0  & \rm{otherwise},
\end{cases}
\end{equation*}
and ${\bx}^{\bY}$ is the vector
of monomials given by
\begin{equation*}
{\bx}^{\bY} = ({\bx}^{\by_1},{\bx}^{\by_2}, \ldots, {\bx}^{\by_m})^{\intercal}.
\end{equation*}
In general, $\bY$ is called the \defi{matrix of vertices}, and $\bA_{\bk}$ is called the \defi{Kirchoff} matrix. 

\medskip

Here, we list one of the most important properties of the Kirchoff matrix $\bA_{\bk}$.

\begin{theorem}[\cite{feinberg1977chemical}]
\label{thm:supp_terminal_linkage}
Let $(G, \bk)$ be a mass-action system, and $T_1, T_2, \ldots, T_t$ be the terminal strongly connected components of $G$. Then there exists a basis $\{\be_1,\be_2,\ldots,\be_t\}$ for $\ker (\bA_{\bk})$, such that 
\begin{equation*}
\be_p =
\begin{cases} 
    \begin{array}{cl}
         [\be_p]_i  > 0, & \text{ if } \by_i \in T_p, \\[5pt]
         [\be_p]_i  = 0, & \text{ otherwise.}
    \end{array} 
\end{cases}
\end{equation*}
\end{theorem}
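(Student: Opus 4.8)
The plan is to exploit the fact that, up to transposition, $\bA_{\bk}$ is the negative of a weighted graph Laplacian, so that $\ker(\bA_{\bk})$ is governed by the ``sinks'' of the directed graph $G$ --- which here are precisely the terminal strongly connected components. I would split the argument into three parts: (i) produce one kernel vector supported on each terminal component, (ii) observe that these are linearly independent, and (iii) show that they span.

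For (i), fix a terminal strongly connected component $T_p$ and consider the principal submatrix $M_p$ of $\bA_{\bk}$ with rows and columns indexed by the vertices of $T_p$. Because $T_p$ is terminal, no edge leaves it, so for a vertex $\by_i\in T_p$ the diagonal entry $[\bA_{\bk}]_{ii}=-\sum_{\by_i\to\by_j\in E}k_{\by_i\to\by_j}$ only involves reactions internal to $T_p$; hence $M_p$ is exactly the negative transpose of the graph Laplacian of the (strongly connected) subgraph induced on $T_p$. By the Matrix-Tree theorem --- equivalently, by Perron--Frobenius applied to $M_p+cI$ for large $c$, using that a strongly connected graph yields an irreducible matrix --- $\ker(M_p)$ is one-dimensional and spanned by a vector $\bw_p$ with strictly positive entries (the $i$-th entry being the sum of the weights of the spanning trees rooted at $\by_i$). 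Padding $\bw_p$ by zeros gives $\be_p\in\RR^m$, and $\bA_{\bk}\be_p=\mathbf 0$: the coordinates indexed inside $T_p$ reproduce $M_p\bw_p=\mathbf 0$, and those indexed outside $T_p$ vanish because $T_p$ being terminal forces $[\bA_{\bk}]_{ji}=0$ whenever $\by_i\in T_p$ and $\by_j\notin T_p$. Part (ii) is then immediate: $\be_1,\dots,\be_t$ have pairwise disjoint supports, hence are linearly independent.

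For (iii), I would pass to the condensation of $G$ (contract each strong component to a point, producing a DAG) and list the strong components $C_1,\dots,C_r$ in a topological order, so that every edge of $G$ between two distinct components runs from a lower-indexed to a higher-indexed component; the terminal components are exactly the sinks of this DAG. In the induced block decomposition, $\bA_{\bk}$ is block lower-triangular. The diagonal block attached to a non-terminal component $C$ equals the induced (negative transpose) Laplacian of $C$ minus a nonzero nonnegative diagonal ``leakage'' matrix recording, for each vertex of $C$, the total rate of reactions that leave $C$; such a block is \emph{nonsingular}, by the theory of irreducible nonsingular M-matrices (an irreducible, weakly diagonally dominant matrix with at least one strictly dominant row is invertible), using that $C$ is strongly connected and at least one of its vertices has a reaction leaving $C$. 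Now let $\bv\in\ker(\bA_{\bk})$ and solve the system block-row by block-row in the chosen order. Whenever $C_b$ is non-terminal, each of its predecessor components has a strictly smaller index and is itself non-terminal (a predecessor has an outgoing edge), so by the running induction those coordinates of $\bv$ already vanish; the $b$-th block-row equation then reads (nonsingular diagonal block)$\,\cdot\,\bv|_{C_b}=\mathbf 0$, forcing $\bv|_{C_b}=\mathbf 0$. Thus $\bv$ is supported on $\bigcup_p T_p$; and for each terminal $T_p$, its predecessors are non-terminal so their $\bv$-coordinates vanish, and the corresponding block-row collapses to $M_p\,\bv|_{T_p}=\mathbf 0$, i.e.\ $\bv|_{T_p}$ is a scalar multiple of $\bw_p$. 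Hence $\bv\in\operatorname{span}\{\be_1,\dots,\be_t\}$, so $\{\be_1,\dots,\be_t\}$ is a basis of $\ker(\bA_{\bk})$ of the required form.

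The bookkeeping with transposes in the definition of $\bA_{\bk}$ and the verification $\bA_{\bk}\be_p=\mathbf 0$ are routine. I expect the crux to be the nonsingularity of the ``leaky Laplacian'' diagonal block attached to a non-terminal strong component; once that (an M-matrix / Perron--Frobenius statement) is in hand, the triangular solve above assembles the proof. As an optional simplification one may first reduce to a single linkage class, since $\bA_{\bk}$ is block diagonal with respect to connected components and every terminal strong component sits inside one linkage class --- but the condensation argument works verbatim for the whole graph.
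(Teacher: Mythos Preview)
The paper does not supply a proof of this theorem; it is quoted as a known result of Feinberg and is only illustrated by Example~\ref{ex:terminal_kernel}. So there is no in-paper argument to compare against.

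Your argument is correct and is essentially the standard one. Part~(i) is right: terminality of $T_p$ ensures the principal submatrix $M_p$ has zero column sums (no leakage), and strong connectivity makes $M_p+cI$ irreducible nonnegative, so Perron--Frobenius (or the Matrix-Tree theorem) yields a one-dimensional kernel spanned by a strictly positive vector; the padding-by-zeros check uses exactly that $[\bA_{\bk}]_{ji}=k_{\by_i\to\by_j}=0$ for $\by_i\in T_p$, $\by_j\notin T_p$. Part~(ii) is immediate. Part~(iii) is also correct: in a topological ordering of the condensation, $\bA_{\bk}$ is block lower-triangular because $[\bA_{\bk}]_{ji}\neq 0$ with $i\neq j$ forces an edge $\by_i\to\by_j$; the diagonal block on a non-terminal strong component $C$ is the internal (negative-transpose) Laplacian with an additional nonnegative diagonal leakage, hence $-M_C^{\intercal}$ is an irreducible Z-matrix with nonnegative row sums and at least one strictly positive row sum, and is therefore a nonsingular M-matrix. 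The key structural observation that makes your induction close---that any predecessor in the condensation DAG necessarily has an outgoing inter-component edge and is thus non-terminal---is exactly what guarantees that, at each step, all earlier blocks feeding into the current row have already been shown to carry zero $\bv$-coordinates (and that terminal blocks never feed forward). With that in hand, the triangular solve yields $\bv|_{C}=\mathbf 0$ on every non-terminal $C$ and $\bv|_{T_p}\in\ker(M_p)$ on every terminal $T_p$, which is precisely the span of $\{\be_1,\dots,\be_t\}$.
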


\begin{example} \label{ex:terminal_kernel}

We will revisit the network shown in Figure~\ref{fig:euclidean_graphs}(a) to verify Theorem \ref{thm:supp_terminal_linkage}. First, we set all vertices in the network as follows:
\begin{equation*}
\begin{split}
& X \equiv \by_1 = (1, 0)^{\intercal}, \ \ 2X \equiv \by_2 = (2, 0)^{\intercal}, \ \ 3X \equiv \by_3 = (3, 0)^{\intercal}, 
\\& 4X+Y \equiv \ \by_4 = (4, 1)^{\intercal}, \ \ 5X+Y \equiv \by_5 = (5, 1)^{\intercal}, \ \ 5X+2Y \equiv \by_6 = (5, 2)^{\intercal}.
\end{split}
\end{equation*}
The Kirchoff matrix of the network is given by:
\begin{equation} \notag
\bA_{\bk}=
\begin{bmatrix}
-k_{12} & k_{21} & 0 & 0 & 0 & 0 \\
k_{12} & -k_{21} & 0 & 0 & 0 & 0 \\
0 & 0 & -k_{34} & k_{43} & 0 & 0 \\
0 & 0 & k_{34} & - k_{43} - k_{45} & 0 & 0 \\
0 & 0 & 0 & k_{45} & -k_{56} & k_{65} \\
0 & 0 & 0 & 0 & k_{56} & -k_{65} 
\end{bmatrix}.
\end{equation}
Using a direct computation, the following vectors form a basis for $\rm{ker}(\bA_{\bk})$:
\begin{equation} \notag
\be_1 = (k_{21}, k_{12}, 0, 0, 0, 0)^{\intercal}, \ \
\be_2 = (0, 0, 0, 0, k_{65}, k_{56})^{\intercal}.
\end{equation}
Thus, we have
\begin{equation*}
\text{supp} (\be_1) = \{1, 2\}, \ \text{and } \
\text{supp} (\be_2) = \{5, 6\}.
\end{equation*}
It is clear that the supports of two basis vectors relate to two terminal strongly connected components $\{X, 2X\}$ and $\{5X+Y, 5X+2Y\}$.
\end{example}

Motivated by the matrix decomposition in \eqref{eq:mass_action_horn_jackson}, we introduce a crucial concept: net reaction vector, and another matrix decomposition in terms of net reaction vectors, which play an important role in finding a realization.

\begin{definition} \label{defn:net_reaction_vector}
Consider a mass-action system $(G, \bk)$, and let $V_S = \{ \by_1, \by_2, \ldots, \by_{m_s}\} \subseteq V$ be the set of source vertices of $G$. For each source vertex $\by_i\in V_S$, the \defi{net reaction vector} $\bw_i$ corresponding to $\by_i$ is given by:
\begin{equation} 
\bw_i = \sum\limits_{\by_i\rightarrow\by_j\in E}k_{\by_i\rightarrow\by_j}(\by_j - \by_i),
\end{equation}
Moreover, we denote the \defi{matrix of net reaction vectors} as follows:
\begin{equation}
\bW = \left(\bw_1, \ \bw_2, \ \ldots, \ \bw_{m_s} \right).
\end{equation}
\end{definition}

Following Definition \ref{defn:net_reaction_vector}, for each source vertex 
$\by_i\in V_S$, we can rewrite the corresponding net reaction vector $\bw_i$ as 
\begin{equation}
\bw_i = \sum\limits_{\by_i\rightarrow\by_j\in E}k_{\by_i\rightarrow\by_j} \by_j 
- \left(\sum\limits_{\by_i\rightarrow\by_j\in E}k_{\by_i\rightarrow\by_j} \right) \by_i.
\end{equation}
Using a direct computation, we can rewrite the matrix decomposition in \eqref{eq:mass_action_horn_jackson} as
\begin{equation} \label{eq:mass_action_net_reaction}
\frac{d\bx}{dt} = \bW {\bx}^{\bY_s},
\end{equation}
where ${\bx}^{\bY_s}$ is the vector
of monomials given by
\begin{equation*}
{\bx}^{\bY_s} = ({\bx}^{\by_1},{\bx}^{\by_2}, \ldots, {\bx}^{\by_{m_s}})^{\intercal}.
\end{equation*}
Further, we let $\bY_s = (\by_1, \ \by_2, \ \ldots, \ \by_{m_s})$ denote the \defi{matrix of source vertices}, whose columns are the source vertices.

\medskip

The following Lemma concerns the matrix of net reaction vectors when the mass-action system is weakly reversible.

\begin{lemma} \label{lem:wr_ker}
Consider a weakly reversible mass-action system $(G,\bk)$ with vertices $\{\by_i\}_{i=1}^m$ and stoichiometric subspace $S$. Let $\{\bw_i\}_{i=1}^m$ be the net reaction vectors of $G$, and $\bW = (\bw_1, \bw_2, \ldots, \bw_{m} )$ be the matrix of net reaction vectors. Then we have
\begin{equation} \label{lem:image_stoich} 
\rm{Im}(\bW) = S.
\end{equation} 
\end{lemma}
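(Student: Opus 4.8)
The plan is to prove the two inclusions $\mathrm{Im}(\bW) \subseteq S$ and $S \subseteq \mathrm{Im}(\bW)$ separately. The first inclusion is essentially immediate: by Definition~\ref{defn:net_reaction_vector}, each net reaction vector $\bw_i = \sum_{\by_i \to \by_j \in E} k_{\by_i \to \by_j}(\by_j - \by_i)$ is a nonnegative linear combination of reaction vectors $\by_j - \by_i$, each of which lies in $S$ by the definition of the stoichiometric subspace. Hence every column of $\bW$ lies in $S$, and therefore $\mathrm{Im}(\bW) \subseteq S$. Note this direction does not use weak reversibility.

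The reverse inclusion $S \subseteq \mathrm{Im}(\bW)$ is where weak reversibility enters, and I expect it to be the crux. Here is the approach. Fix a linkage class $L$ of $G$; since $G$ is weakly reversible, $L$ is strongly connected. I want to show that every reaction vector $\by_j - \by_i$ with $\by_i \to \by_j$ an edge inside $L$ lies in $\mathrm{Im}(\bW)$; summing over all linkage classes then gives $S \subseteq \mathrm{Im}(\bW)$. To do this, observe first that $\sum_{i : \by_i \in L} \bw_i = \sum_{\by_i \to \by_j \in E,\ \by_i, \by_j \in L} k_{\by_i \to \by_j}(\by_j - \by_i)$, and one can check this sum equals $\mathbf{0}$: because $L$ is strongly connected (indeed, because each edge lies on an oriented cycle), the reaction vectors around any cycle telescope to zero, and the whole sum decomposes into such cyclic contributions — alternatively, this is just the statement that $\bY_L \bA_{\bk,L} \mathbf{1} = \mathbf{0}$ since $\mathbf{1} \in \ker \bA_{\bk,L}^{\intercal}$ (each column of the Laplacian sums to zero). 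So the net reaction vectors attached to a single linkage class are linearly dependent with a known relation.

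Now I would argue as follows. Consider the span $\mathrm{span}\{\bw_i : \by_i \in L\}$. I claim it equals $S_L := \mathrm{span}\{\by_j - \by_i : \by_i \to \by_j \in E,\ \text{both in } L\}$. The inclusion $\subseteq$ is the first part. For $\supseteq$: pick any edge $\by_a \to \by_b$ in $L$. Using strong connectivity, there is an oriented path from $\by_b$ back to $\by_a$, forming an oriented cycle $C$ through the edge $\by_a \to \by_b$. The difficulty is that $\bw_a$ is a \emph{weighted} combination of \emph{all} out-edges of $\by_a$, not just the one along $C$, so I cannot directly isolate $\by_b - \by_a$. The resolution: I will instead show $S_L \subseteq \mathrm{Im}(\bW)$ by a dimension/rank argument combined with the cycle structure — specifically, the reaction vectors of $G$ restricted to $L$ span $S_L$, and one shows that the net reaction vectors $\{\bw_i\}_{\by_i \in L}$ span the same space by noting that for a strongly connected graph, the image of the (weighted) incidence-type map $\bk \mapsto (\bw_i)_i$ has the same span as the unweighted reaction vectors; equivalently, the map sending a vertex $\by_i$ to $\bw_i$ and the map sending an edge to its reaction vector have the same image once restricted to a strongly connected component, because any reaction vector $\by_b - \by_a$ can be written as a telescoping sum along a cycle, and such a cyclic sum can in turn be expressed using the $\bw_i$'s after subtracting the zero-sum relation $\sum_{\by_i \in L}\bw_i = \mathbf 0$ appropriately — more carefully, one builds $\by_b - \by_a$ from the spanning-tree structure of $L$ together with the fundamental cycles. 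I expect the cleanest writeup is: reduce to showing $\dim \mathrm{span}\{\bw_i : \by_i \in L\} = \dim S_L$, handle it for each strongly connected component, and invoke that for a strongly connected graph on vertex set $L$ the reaction vectors of any particular edge set that still makes it strongly connected all give the same span $S_L$, which lets one replace the given edges by a convenient cyclic edge set on which the $\bw_i$ computation is transparent.

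The main obstacle, then, is precisely decoupling the weighted sums: showing that the weights $k_{\by_i \to \by_j}$ (which are arbitrary positive numbers) do not cause a drop in the span of $\{\bw_i\}$ relative to $S$. I anticipate handling this either via the explicit oriented-cycle decomposition of the edge set of a weakly reversible graph (every weakly reversible graph's edge set is a union of oriented cycles) together with linearity, or via a perturbation/genericity remark reducing to the unweighted case; the cycle-decomposition route is more self-contained and is what I would write out.
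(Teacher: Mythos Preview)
Your easy inclusion $\mathrm{Im}(\bW)\subseteq S$ is fine. The reverse inclusion, however, has a real gap: you correctly locate the obstacle (``decoupling the weighted sums'') but none of the sketched fixes actually closes it. Replacing the edge set of $L$ by ``a convenient cyclic edge set'' is illegitimate, because the vectors $\bw_i$ depend on the actual edges and the actual rate constants, not just on the vertex set of $L$; changing edges changes $\bw_i$, so you cannot transport the computation. The perturbation/genericity route also does not go through as stated: you must prove the claim for \emph{every} choice of positive $\bk$, and nothing in your outline rules out rank drops of $\bW$ on a positive-measure set of parameters. The cycle/spanning-tree suggestion likewise stalls: even after decomposing $E$ into oriented cycles, a single $\bw_i$ aggregates contributions from all cycles passing through $\by_i$, so you cannot peel off an individual reaction vector $\by_b-\by_a$ from the $\bw_i$'s by telescoping alone. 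In short, the proposal identifies the right difficulty but does not overcome it.

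The paper bypasses all of this with a short orthogonality-plus-extremality argument. Assume $\mathrm{Im}(\bW)\subsetneq S$; then there is a nonzero $\bv\in S$ orthogonal to every $\bw_i$. Because $\bv\in S$ is nonzero, the linear functional $\by\mapsto \bv\cdot\by$ is nonconstant on the vertex set. Let $V_{\max}$ be the vertices where $\bv\cdot\by$ is maximal. Weak reversibility forces an edge $\by_1\to\by_2$ with $\by_1\in V_{\max}$ and $\by_2\notin V_{\max}$. Then every term in $\bv\cdot\bw_1=\sum_j k_{\by_1\to\by_j}\,\bv\cdot(\by_j-\by_1)$ is nonpositive and the $\by_2$ term is strictly negative, so $\bv\cdot\bw_1<0$, contradicting $\bv\perp\bW$. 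This ``maximum principle'' step handles all rate constants at once and is what your plan is missing; if you want a constructive alternative, the cleanest is to use $\bW=\bY\bA_{\bk}$, show (via Theorem~\ref{thm:supp_terminal_linkage}) that $\mathrm{Im}(\bA_{\bk})$ is exactly the vectors summing to zero on each linkage class, and observe that $\bY$ maps that subspace onto $S$.
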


\begin{proof}
It is clear that $\rm{Im}(\bW) \subseteq S$ from Definition \ref{defn:net_reaction_vector}.
Suppose that $\rm{Im}(\bW)\subset S$, then there exists a non-zero vector $\bv$, such that
\begin{equation} \notag
\bv \in S, \ \text{and } \ \bv \perp \bW.
\end{equation}
Since $\mathbf{0} \neq \bv \in S$, there exists a reaction $\by_i \to \by_j \in E$ such that $\bv \cdot (\by_j - \by_i) \neq 0$. This implies that the set $\{ \bv \cdot \by_i \}_{i=1}^m$ has at least two different numbers. Now let $V_{\max}$ be the subset of vertices which maximizes the dot product as follows.
\begin{equation} \notag
V_{\max} = \{ \by_i \in V: \ \bv \cdot \by_i = \max_j ( \bv \cdot \by_j ) \}.
\end{equation}
Since $G$ is weakly reversible, there exists an edge from a vertex in $V_{\max}$ to a vertex not belonging to it. Without loss of generality, let $\by_1 \in V_{\max}$, and $\by_1 \to \by_2 \notin V_{\max}$ be this edge. Note that for all $i=1$, $2,\ldots, m$, we have $\bv \cdot (\by_i - \by_1) \leq 0$. Thus, we obtain
\begin{equation} \notag
\bv \cdot \bw_1 
= \sum_{\by_j \in V} k_{\by_1 \to \by_j} \bv \cdot (\by_j - \by_1) \leq k_{\by_1 \to \by_2} \bv \cdot (\by_2 - \by_1) < 0. 
\end{equation}
This contradicts with $\bv \perp \bW$, and the result follows.
\end{proof}

At the end of this section, we introduce some important dynamical properties. 

\begin{definition}
Let $(G, \bk)$ be a mass-action system. Then $(G, \bk)$ is called \defi{persistent}, if every solution $\bx(t)$ with initial condition $\bx(0) \in \mathbb{R}^n_{>0}$ satisfies the following:
\begin{equation} \notag
\displaystyle\liminf_{t\to\infty}\bx_i(t) >0, \ \text{for } i=1,2,\ldots,n.
\end{equation}
\end{definition}

\begin{definition}
Let $(G, \bk)$ be a mass-action system. Then $(G, \bk)$ is called \defi{permanent}, if given any stoichiometric compatibility class $A$ and any solution $\bx(t)$ with initial condition $\bx(0)\in A$,
there exists a time $T$ and a compact set $D\subset A$, such that for all $t > T$,
\begin{equation*}
\bx(t)\in D.
\end{equation*}
\end{definition}

\begin{definition}
Let $(G, \bk)$ be a mass-action system. 
A point $\tilde{\bx}\in\mathbb{R}^n_{>0}$ is said to be a \defi{global attractor} within its stoichiometric compatibility class, if $\displaystyle\lim_{t\to\infty}\bx(t) = \tilde{\bx}$.
\end{definition}

\section{Main result}
\label{sec:algorithms}

The goal of this section is to present the main algorithm of this paper: Algorithm~\ref{algorithm 1}, which searches for the existence of a weakly reversible realization consisting of a single linkage class. In particular, this algorithm outputs a maximal realization, whenever it exists. The input of Algorithm~\ref{algorithm 1} is the matrix of source vertices $\bY_s = (\by_1,\by_2,\ldots,\by_m)$, and the matrix of net reaction vectors $\bW = (\bw_1,\bw_2,\ldots,\bw_m)$.

\subsection{Algorithm for weakly reversible realization with a single linkage class}

Here, we sketch the key idea behind Algorithm~\ref{algorithm 1}: given a weakly reversible realization $G = (V, E)$ with a single linkage class, adding new reactions among the vertices in $V$ on the realization $G$ preserves the properties of weak reversibility and single linkage class. We present this algorithm below and give proof of its correctness.

\begin{algorithm}
\caption{(Check the existence of a weakly reversible realization with a single linkage class)} \label{algorithm 1}

\begin{algorithmic}[1]

\smallskip

\REQUIRE The matrix of source vertices $\bY_s = (\by_1, \ldots, \by_m)$, and the matrix of net reaction vectors $\bW = (\bw_1, \ldots, \bw_m)$ that generate the dynamical system $\dot{\bx} = \sum\limits_{i=1}^m \bx^{\by_i} \bw_i$. 

\smallskip 

\ENSURE Either return a weakly reversible realization consisting of a single linkage class, or print that such a realization does not exist.

\smallskip

\FOR{$i = 1, 2, \ldots, m$}

\State Define the matrix $\bB_i \in \mathbb{R}_{n \times m}$, with $k^{\rm th}$ column $B_{i, k} := (\by_k -\by_i)$ for $1 \leq k \leq m$. \label{alg: Bi}
 
\IF{there exists a vector $\bv^* = (\bv^*_1, \ldots, \bv_m^*) \in \mathbb{R}^{m}_{\geq 0}$, such that $\bB_i \bv^* = \bw_i$} \label{algo_1_line:2}

\State Set $\bv^*_i = 1$.

\ELSE

\STATE {\bf Print:} There is no realization. {\bf Exit.}

\ENDIF

\State Set $S_i = \text{supp} (\bv^*)$.

\FOR{$j = 1, 2, \dots, m$} \label{algo_1_line:9}

\IF{$j \in S_i$} 
\State {\bf Continue}

\ELSE

\IF{there exists a vector $\bv \in \mathbb{R}^{m}_{\geq 0}$, such that $\bB_i \bv = \bw_i$ and $\bv_j > 0$} 

\State $S_i = S_i \cup \text{supp} (\bv)$.

\ENDIF

\ENDIF

\algstore{bkbreak}

\end{algorithmic}

\begin{algorithmic}[1]

\algrestore{bkbreak}

\ENDFOR \label{algo_1_line:17}

\ENDFOR

\State Define vector $r_i := (r_{i,1}, r_{i,1}, \cdots, r_{i,m})^{\intercal}$, with
\begin{equation} \label{alg: rij}
    r_{i,j} =
    \begin{cases}
    1, & \text{for } j \in S_i, j \neq i, \\[5pt]
    0, & \text{for } j \notin S_i, \\[5pt]
    - \sum\limits_{l \neq i} r_{i,l}, & \text{for } j = i.
    \end{cases}
\end{equation}

\State Collect $\{r_i\}^m_{i=1}$ and construct the Kirchoff matrix $Q = (r_1,r_2,\ldots,r_m) \in \mathbb{R}_{m \times m}$. \label{algo_1_line:7}

\IF{$\dim (\ker(Q)) = 1$ and $\text{supp} (\ker(Q)) = \{1, \ldots, m \}$}  \label{algo_1_line:8}

\STATE {\bf Print:} There exists a weakly reversible realization  with a single linkage class.

\STATE {\bf Print:} The vertices of this realization are given by $V = \{ \by_1, \ldots, \by_m \}$.

\STATE {\bf Print:} 
The edges of this realization are given by $E = \{ \by_i \to \by_j: r_{i,j} > 0\}$.

\ELSE

\STATE {\bf Print:} There is no realization.

\ENDIF

\end{algorithmic}
\end{algorithm}

\newpage

\tikzset{%
    Node/.style={rectangle, rounded corners, draw=black, thick, fill=blue!10, fill opacity = 1, minimum width=4.5cm, minimum height=1cm, outer sep=0pt},
    Edge/.style={very thick, style={very thick, arrows={-Stealth[length=7.5pt,width=7.5pt]}}},
}

\begin{figure}[H]
\centering
    \begin{tikzpicture}
    \node[Node,  minimum width=5.75cm, fill=orange!30] (fail1) at (1,-3.75) {\small No realization};
    \node[Node, minimum height=2cm, minimum width=5.75cm] (input) at (9,-0.25) {};
        \node at (7.1,0) [right] {\small $\bY_s = \begin{pmatrix} \by_1 ,  \ldots ,  \by_m\end{pmatrix} \in  \mathbb{R}_{n \times m}$};
        \node at (7,-0.5) [right] {\small $\bW = \begin{pmatrix} \bw_1 ,  \ldots ,  \bw_m\end{pmatrix} \in \mathbb{R}_{n \times m}$};
    \node[Node,  minimum width=7cm, minimum height=1.5cm] (realization) at (9,-3.75) {};
        \node at (9,-3.5) {\small Check if there exists a vector $\bv^*\in \mathbb{R}^{m}_{\geq 0}$, };
        \node at (9,-4) {\small such that $\bB_i \bv^* = \bw_i$ where $B_{i, k} = \by_k -\by_i$?};
    \draw[Edge] (input)--(realization) node [midway, left] {\small \ttfamily\bfseries \textcolor{blue}{for} i = 1,\ldots,m};
    \draw[Edge] (realization)node[xshift=-3.9cm, above] {\small no}--(fail1);
    \node[Node,  minimum width=7cm, minimum height=1cm] (S) at (9,-6) {};
        \node at (9,-6) {\small Set $\bv^*_i=1$, and $S_i = \text{supp} (\bv^*)$};
    \draw[Edge] (realization)--(S) node [midway, right] {\small yes};
    \node[Node, minimum height=1cm, minimum width=7cm] (new) at (9,-8.5) {};
        \node at (9,-8.5) {\small Check if $j \in S_i$?};
        \draw[Edge] (S)--(new) node [midway, right] {\small \ttfamily\bfseries \textcolor{blue}{for} j = 1,\ldots,m};  \draw[Edge] (new) to node[xshift=0.1cm, above] {\small yes} (4.5,-8.5) 
        to (4.5,-7.25); 
    \node[Node, minimum height=2cm, minimum width=7cm] (Update) at (9,-11) {};
        \node at (9,-10.4) {\small Check if there exists a vector $\bv \in \mathbb{R}^{m}_{\geq 0}$, };
        \node at (9,-11) {\small such that $\bB_i \bv = \bw_i$ and $\bv_j > 0$?};
        \node at (9,-11.6) {\small If yes, $S_i = S_i \cup \text{supp} (\bv)$};
        \draw[Edge] (Update) to node[xshift=0cm, above] {} (9, -12.4) to (3,-12.4)
        to (3,-7.25) to (9, -7.25);
    \draw[Edge] (new)--(Update) node [midway, right] {\small no};; 
        \draw[Edge] (Update) to node[right] {\small } (9,-13.2) 
        to (13,-13.2) to (13,-2.1) to (9, -2.1);
    \node[Node,  minimum height=2.75cm, minimum width=5.75cm] (Q) at (9,-15.5) {};
    \node at (9,-14.75) {\small Construct matrix $Q = (r_1,\ldots,r_m)$.};
    \node at (9,-15.5) {\small Check if $\dim (\ker(Q)) = 1$, and};
    \node at (9,-16.25) {\small $\text{supp} (\ker(Q)) = \{1, \ldots, m \}$?};
    \draw[Edge] (Update)--(Q) node[below] {};
    \draw[Edge] (Q)node[xshift=-3.3cm, above] {\small no}--(1,-15.5) --(fail1);
    \node at (9,-12.4) {$\bullet$};
    \node at (9,-12.4) [right] {\small \ttfamily\bfseries \textcolor{blue}{endfor}\,};
    \node at (9,-13.2) {$\bullet$};
    \node at (9,-13.2) [left] {\small \ttfamily\bfseries \textcolor{blue}{endfor}\,};
    \node[Node,  minimum width=5.75cm, fill=green!30] (success) at (9, -18.5) {\small A weakly reversible realization with a single linkage class found};
    \draw[Edge] (Q)--(success) node [midway, right] {\small yes};

\end{tikzpicture}  
\caption{Algorithm~\ref{algorithm 1} for finding a weakly reversible realization with a single linkage class that generates a given polynomial dynamical system $\dot{\bx} = \sum_{i=1}^m \bx^{\by_i} \bw_i$. 
}
\label{fig:Alg} 
\end{figure}
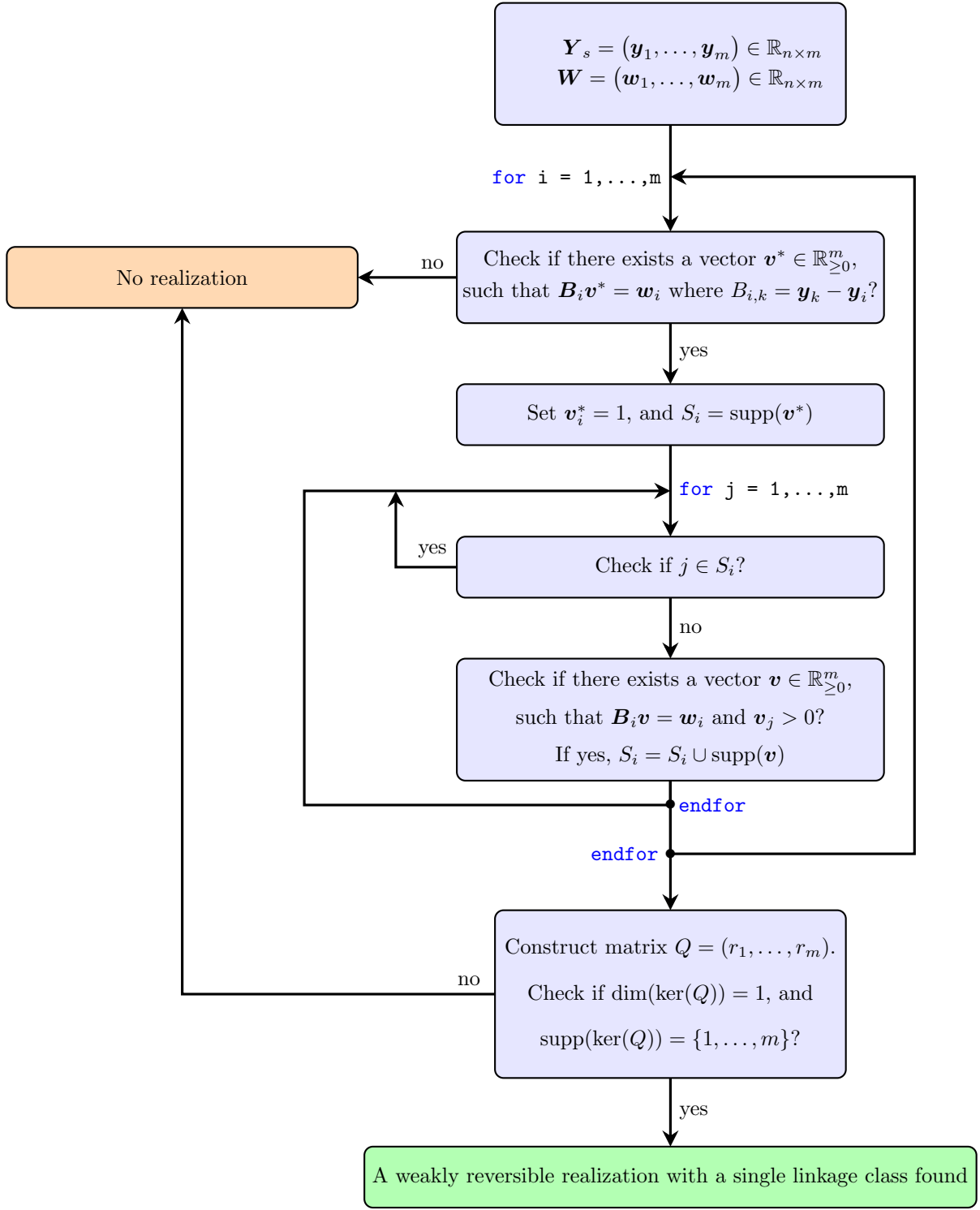

Now we show the correctness of Algorithm~\ref{algorithm 1} via the following two Lemmas.

\begin{lemma}
\label{lem: algorithm 1 part a}

Suppose Algorithm~\ref{algorithm 1} reaches line~\ref{algo_1_line:8} and satisfies the conditions on line~\ref{algo_1_line:8}, then there exists a weakly reversible realization consisting of a single linkage class that generates the dynamical system $\dot{\bx} = \sum\limits_{i=1}^m \bx^{\by_i} \bw_i$.
\end{lemma}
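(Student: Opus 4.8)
The plan is to show that the matrix $Q$ constructed on line~\ref{algo_1_line:7} is exactly the Kirchoff matrix of a candidate network $G = (V,E)$ on the vertex set $V = \{\by_1,\dots,\by_m\}$ with edge set $E = \{\by_i \to \by_j : r_{i,j} > 0\}$, and then verify the two things we need: (i) $G$ is weakly reversible with a single linkage class, and (ii) the mass-action system generated by $(G,\bk)$ for the rate constants $k_{\by_i\to\by_j} = r_{i,j}$ (all equal to $1$ by construction) reproduces $\dot\bx = \sum_i \bx^{\by_i}\bw_i$. First I would unwind the definitions: since every diagonal entry $r_{i,i} = -\sum_{l\neq i} r_{i,l}$ and the off-diagonal entries are the $0/1$ values prescribed in \eqref{alg: rij}, $Q$ has exactly the shape of an $\bA_{\bk}$-matrix with unit rate constants on the edges of $E$. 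Hence the associated mass-action system is $\dot\bx = \bY Q\, \bx^{\bY}$.

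Next I would check that $G$ reproduces the target dynamics. The $i$-th column of $\bY Q$ is $\sum_{j\in S_i,\, j\neq i}(\by_j - \by_i)$, i.e.\ $\sum_{j} r_{i,j}(\by_j - \by_i)$, which is precisely the net reaction vector $\bw_i$ of $G$ at the source $\by_i$. So I must argue that this equals the prescribed $\bw_i$. This is where the set $S_i$ built in the inner loop does its work: on line~\ref{algo_1_line:2} the algorithm found $\bv^* \in \mathbb{R}^m_{\geq 0}$ with $\bB_i \bv^* = \bw_i$, i.e.\ $\sum_k v^*_k(\by_k - \by_i) = \bw_i$; after setting $v^*_i = 1$ this says $\bw_i$ is a nonnegative combination of the vectors $\{\by_j - \by_i : j \in S_i\}$ with the coefficient on $j=i$ being irrelevant since $\by_i - \by_i = 0$. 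The reason I can replace the coefficients $v^*_k$ by the unit coefficients $r_{i,j}$ is Lemma~\ref{lem:wr_ker} together with the fact that weak reversibility of the final network is what matters, not the specific rate constants: once $G$ is weakly reversible with $\mathrm{Im}(\bW) = S$, the net reaction vector at each source is determined up to the freedom captured by $\ker$. Actually the cleaner route is: the condition $\dim(\ker(Q)) = 1$ with full support guarantees, via Theorem~\ref{thm:supp_terminal_linkage}, that $G$ has a single terminal strongly connected component supported on all of $V$; combined with the fact that every $\by_i$ is a source (it has outgoing edges, since $S_i \ni$ at least the indices in $\mathrm{supp}(\bv^*)$ which is nonempty as $\bw_i$ may be nonzero — and even if $\bw_i = 0$ one handles it separately), this forces $G$ to be weakly reversible with exactly one linkage class. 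Then one re-derives that the unit-rate net reaction vectors $\sum_j r_{i,j}(\by_j-\by_i)$ must again equal $\bw_i$ by appealing to the linear-algebra bookkeeping of which stoichiometric directions $S_i$ collects.

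The steps in order: (1) identify $Q$ as a Kirchoff matrix with unit rates and read off $E$; (2) show each $\by_i$ is a source and each column of $\bY Q$ equals the net reaction vector at $\by_i$; (3) use line~\ref{algo_1_line:8}'s hypothesis $\dim(\ker Q) = 1$ and $\mathrm{supp}(\ker Q) = \{1,\dots,m\}$ together with Theorem~\ref{thm:supp_terminal_linkage} to conclude $G$ has a single terminal strongly connected component containing all vertices, hence — since every vertex is a source — $G$ is weakly reversible with one linkage class; (4) verify the generated dynamics coincide with $\dot\bx = \sum_i \bx^{\by_i}\bw_i$ by checking $\bY Q\,\bx^{\bY} = \sum_i \bx^{\by_i}\bw_i$ column-by-column, using that $\bw_i$ lies in the cone of $\{\by_j - \by_i : j\in S_i\}$.

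The main obstacle I anticipate is step~(4): showing that replacing the found nonnegative coefficients $v^*$ (and the extra vectors $\bv$ gathered in the inner loop) by the *uniform* coefficients $r_{i,j} = 1$ still yields exactly $\bw_i$, not merely some vector in the same cone. The inner loop over $j$ enlarges $S_i$ by throwing in supports of further solutions $\bv$ of $\bB_i\bv = \bw_i$, so $S_i$ is the union of supports of a whole family of nonnegative representations of $\bw_i$; one needs the elementary fact that if $\bw_i = \bB_i \bv$ for several $\bv \geq 0$, then $\bw_i$ is also $\bB_i$ times a convex combination, and in particular $\bw_i$ is in the relative interior of the cone generated by $\{\by_j - \by_i : j \in S_i\}$ — which is exactly what is needed to realize $\bw_i$ with strictly positive (hence, after rescaling, unit) coefficients on every $j \in S_i$. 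Pinning down this convex-geometry claim cleanly, and making sure it interacts correctly with the $\dim(\ker Q)=1$ condition, is the crux; the rest is bookkeeping.
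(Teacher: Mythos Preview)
There is a genuine gap at exactly the point you flagged as the main obstacle. The mass-action system with \emph{unit} rate constants --- the one whose Kirchoff matrix is literally $Q$ --- does \emph{not} in general reproduce the target dynamics $\dot\bx = \sum_i \bx^{\by_i}\bw_i$. Its $i$-th net reaction vector is $\sum_{j\in S_i,\,j\ne i}(\by_j-\by_i)$, and there is no reason this should equal $\bw_i$: a vector lying in the relative interior of a cone is not the same as the sum of the cone's generators. Your parenthetical ``strictly positive (hence, after rescaling, unit)'' is where the argument breaks; you cannot rescale a collection of distinct positive coefficients to all be $1$ while preserving the value of their linear combination. A concrete instance is Example~\ref{ex: alg pass}, where $S_1=\{1,2,3\}$ and the unit-rate net vector at $\by_1$ is $(\by_2-\by_1)+(\by_3-\by_1)=1+2=3$, not $\bw_1=1$.

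The paper's fix is exactly the convex-combination observation you made, but used for a different purpose. Averaging the finitely many nonnegative solutions $\bv^1,\dots,\bv^{a_i}$ of $\bB_i\bv=\bw_i$ whose supports union to $S_i$ gives a single $\tilde\bv\ge 0$ with $\bB_i\tilde\bv=\bw_i$ and $\text{supp}(\tilde\bv)=S_i$. One then takes the \emph{actual} rate constants of the realization to be the entries of $\tilde\bv$, not $1$. This yields a mass-action system on the edge set $E=\{\by_i\to\by_j:j\in S_i,\,j\ne i\}$ that genuinely generates $\dot\bx=\sum_i\bx^{\by_i}\bw_i$. The matrix $Q$ is used only to test the \emph{structural} properties of this edge set: weak reversibility and the number of linkage classes depend solely on which edges are present, not on the values of the rate constants, so one may as well check them on the unit-rate Kirchoff matrix $Q$. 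The hypothesis $\dim(\ker Q)=1$ with full support then yields, via Theorem~\ref{thm:supp_terminal_linkage}, that the underlying graph is weakly reversible with a single linkage class --- and hence so is the realization with the $\tilde\bv$-rates. In short, your step~(3) is correct and is the heart of the argument; steps~(2) and~(4) should be replaced by ``use the $\tilde\bv$-coefficients as rate constants,'' after which the dynamical match is immediate and nothing remains to prove.
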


\begin{proof}

Since the algorithm passes the condition on line~\ref{algo_1_line:2}\, for $i = 1, \ldots, m$, 
all net reaction vectors $\{\bw_i\}^m_{i=1}$ can be realized by conical combinations of the vectors $B_{i, k} = (\by_k -\by_i)$, with $1 \leq k \leq m$.


For $i = 1, \ldots, m$, we denote $S_i$ by the union of supports on some vectors $\bv \in \mathbb{R}^{m}_{\geq 0}$, such that 
\begin{equation}
\bB_i \bv = \bw_i.
\end{equation}
Suppose there exist $a_i$ distinct vectors $\bv^1, \ldots, \bv^{a_i} \in \mathbb{R}^{m}_{\geq 0}$, with 
$S_i = \bigcup\limits^{a_i}_{q=1} \text{supp} (\bv^q)$.
Then we consider the following vector:
\begin{equation} \label{tilde v}
    \Tilde{\bv} = \frac{1}{a_i} \sum\limits^{a_i}_{q=1} \bv^q,
\end{equation}
and it satisfies
\begin{equation} \notag
\bB_i \Tilde{\bv} = \bw_i, \ \text{and } \ 
\Tilde{\bv} \in \mathbb{R}^{m}_{\geq 0}.
\end{equation}

Recall that each $v^i$ represents one realization corresponding to the net reaction vector $\bw_i$. Here we choose the vector $\Tilde{\bv}$ in~\eqref{tilde v}, where we have weighted all vectors $\{\bv^q\}^{a_i}_{q=1}$ equally in the graph. Hence, it is clear that the reaction $\by_i \to \by_j$ represented by $j \in S_i$  is included in the realization. Further, we note that scaling the reaction rates neither affects weak reversibility nor the number of linkage classes. Hence, the Kirchoff matrix can be designed from line~\ref{algo_1_line:7}.

Using Theorem~\ref{thm:supp_terminal_linkage}, we know that the kernel of the Kirchoff matrix has a basis consisting of non-negative vectors whose supports are the terminal strongly connected components. Recall that since the algorithm satisfies the condition on line~\ref{algo_1_line:8}, we have 
\begin{equation}
\dim (\ker(Q)) = 1, \ \text{and } \
\text{supp} (\ker(Q)) = \{ 1, \ldots, m \}.
\end{equation}
This implies that all vertices corresponding to the Kirchoff matrix $Q$ are in the same terminal strongly connected component. Therefore, this realization is weakly reversible and consists of a single linkage class. 
\end{proof}

\begin{lemma}
\label{lem: algorithm 1 part b}

Suppose there exists a weakly reversible realization consisting of a single linkage class on $\dot{\bx} = \sum\limits_{i=1}^m \bx^{\by_i} \bw_i$, then Algorithm~\ref{algorithm 1} must satisfy the conditions on lines~\ref{algo_1_line:2} and~\ref{algo_1_line:8}.
\end{lemma}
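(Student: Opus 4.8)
\textbf{Proof proposal for Lemma~\ref{lem: algorithm 1 part b}.}

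The plan is to argue by contrapositive-style construction: starting from a hypothetical weakly reversible single linkage class realization $G = (V, E)$ of the system $\dot{\bx} = \sum_{i=1}^m \bx^{\by_i} \bw_i$, I would directly exhibit the vectors required at lines~\ref{algo_1_line:2} and~\ref{algo_1_line:8}. First I would observe that since $G$ realizes the system, for each source vertex $\by_i$ the net reaction vector decomposes as $\bw_i = \sum_{\by_i \to \by_j \in E} k_{\by_i \to \by_j}(\by_j - \by_i)$. Reading off the coefficients, define $v^*_j := k_{\by_i \to \by_j}$ for $\by_i \to \by_j \in E$ and $v^*_j := 0$ otherwise; then $\bv^* \in \mathbb{R}^m_{\geq 0}$ and $\bB_i \bv^* = \bw_i$ by the very definition of the columns $B_{i,k} = \by_k - \by_i$. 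This shows the condition on line~\ref{algo_1_line:2} is satisfied for every $i$, so the algorithm does not exit early.

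Next I would analyze what $S_i$ becomes at the end of the inner loop (lines~\ref{algo_1_line:9}--\ref{algo_1_line:17}). The key point is that $S_i$ is, by construction, the union of the supports of \emph{all} nonnegative solutions $\bv$ of $\bB_i \bv = \bw_i$ (the inner loop tests each coordinate $j$ in turn and adds $\operatorname{supp}(\bv)$ whenever some feasible $\bv$ has $\bv_j > 0$; a convexity/averaging argument as in Lemma~\ref{lem: algorithm 1 part a} shows this union is itself realized, and that every $j$ achievable by some solution is captured). In particular, since the $\bv^*$ coming from $G$ is one such solution, $\operatorname{supp}(\bv^*) \subseteq S_i$, so every edge $\by_i \to \by_j$ of $G$ satisfies $j \in S_i$, i.e. $r_{i,j} > 0$ for those $j$. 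Hence the graph $G'$ with edge set $E' = \{\by_i \to \by_j : r_{i,j} > 0\}$ produced by the algorithm contains $G$ as a subgraph on the same vertex set.

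Then I would use this containment together with Theorem~\ref{thm:supp_terminal_linkage} applied to $Q$. Since $G'$ is obtained from $G$ by possibly adding edges among the same $m$ vertices, and $G$ is already weakly reversible with a single linkage class, $G'$ is weakly reversible with a single linkage class as well (adding edges within one strongly connected component on the same vertices cannot create new linkage classes and cannot destroy strong connectivity). Therefore $G'$ has exactly one terminal strongly connected component, namely all of $V$, so by Theorem~\ref{thm:supp_terminal_linkage} the kernel of its Kirchhoff matrix $Q$ is one-dimensional and spanned by a strictly positive vector; that is, $\dim(\ker(Q)) = 1$ and $\operatorname{supp}(\ker(Q)) = \{1, \ldots, m\}$, which is exactly the condition on line~\ref{algo_1_line:8}.

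The main obstacle I anticipate is the claim that $S_i$ really equals the union of supports over \emph{all} feasible nonnegative $\bv$, and that the resulting aggregated graph is genuinely a realization of the system while remaining weakly reversible with one linkage class. The subtlety is that the inner loop only adds $\operatorname{supp}(\bv)$ for \emph{one} witnessing $\bv$ per coordinate $j$, so one must check (via the averaging trick $\tilde{\bv} = \frac{1}{a_i}\sum \bv^q$ as in the previous lemma, which preserves $\bB_i \tilde{\bv} = \bw_i$ and nonnegativity) that the accumulated support is attained by a single admissible combination, and that summing these across $i$ still produces $\dot{\bx} = \sum_i \bx^{\by_i}\bw_i$. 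Once that bookkeeping is in place, the argument reduces to the monotonicity statement "$G \subseteq G'$ on the same vertex set" plus Theorem~\ref{thm:supp_terminal_linkage}, both of which are straightforward.
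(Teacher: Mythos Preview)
Your proposal is correct and follows essentially the same route as the paper's proof: exhibit a feasible $\bv^*$ from the given realization $G$ to pass line~\ref{algo_1_line:2}, argue that the final $S_i$ contains $\operatorname{supp}(\bv^*)$ so that $G \subseteq G'$ on the same vertex set, observe that adding edges within a weakly reversible single linkage class preserves both properties, and then invoke Theorem~\ref{thm:supp_terminal_linkage} on $Q$ to get the kernel condition on line~\ref{algo_1_line:8}. One small remark: the ``obstacle'' you flag at the end is not actually needed for \emph{this} lemma---line~\ref{algo_1_line:8} only tests the graph-theoretic structure of $G'$ via $\ker(Q)$, so you do not need to verify here that $G'$ is a bona fide realization of the system (that was the content of Lemma~\ref{lem: algorithm 1 part a}); the containment $\operatorname{supp}(\bv^*)\subseteq S_i$, which is immediate from the inner loop's test at each $j$, is all you need.
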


\begin{proof}

From the existence of a realization, all net reaction vectors $\{\bw_i\}^m_{i=1}$ can be realized by conical combinations of the vectors $(\by_k -\by_i)^m_{k=1}$.
Thus, the algorithm must satisfy the condition on line~\ref{algo_1_line:2}, for $i = 1, \ldots, m$. Now it suffices for us to show
\begin{equation} \label{alg 1 line 8}
\dim (\ker(Q)) = 1, \ \text{and } \
\text{supp} (\ker(Q)) = \{1, \ldots, m \}.
\end{equation}

Here we claim that the realization produced by Algorithm~\ref{algorithm 1} consists of the maximum number of reactions. To realize the system $\dot{\bx} = \sum_{i=1}^m \bx^{\by_i} \bw_i$, we need to find a vector $\bv$ for each vertex $\by_i$, such that 
\begin{equation} \label{alg 1 realization}
\bB_i \bv = \bw_i, \ \text{and } \
\bv \in \mathbb{R}^{m}_{\geq 0}.
\end{equation}
First, for $i = 1, \ldots, m$, we get a vector $\bv^* \in \mathbb{R}^{m}_{\geq 0}$ from line~\ref{algo_1_line:2}, which solves Equation~\eqref{alg 1 realization}. 
After setting $\bv^*_i = 1$, we define the initial support set $S_i$ as follows:
\begin{equation}
S_i = \text{supp} (\bv^*).
\end{equation}
Next, we build an inner loop on $j = 1, \ldots, m$. If $j \in S_i$, this implies that we already incorporated the reaction $\by_i \to \by_j$ in the realization. Otherwise, for each $j \notin S_i$, we further check whether there exists a vector $\bv \in \mathbb{R}^{m}_{\geq 0}$, such that 
\begin{equation} \label{alg 1 realization j>0}
\bB_i \bv = \bw_i, \ \text{and } \ 
\bv_j > 0.
\end{equation}
Once we find such vector $\bv$, we update the set $S_i$ as
\begin{equation}
S_i := S_i \cup \text{supp} (\bv).
\end{equation}
This implies that whenever $j \in \text{supp} (\bv)$, we have $j \in S_i$. 

After going through the whole inner loop, we obtain the complete version of set $S_i$. Then we follow the construction in~\eqref{tilde v}, and it is clear that the reaction $\by_i \to \by_j$ represented by $j \in S_i$ is included in the realization.

Now suppose there is a vector $\hat{\bv}$, solving Equation~\eqref{alg 1 realization} and $\text{supp} (\hat{\bv}) \not\subseteq S_i$. 
This implies that there exists $j \in \text{supp} (\hat{\bv})$ with $j \notin S_i$, a contradiction. Thus, the set $S_i$ contains the \emph{maximal} number of positive entries.

From the claim above and line~\ref{algo_1_line:7}, we deduce that for any realization of the system, all reactions between $\{\by_i\}^m_{i=1}$ are included in the realization given by the Kirchoff matrix $Q$. Note that adding more reactions among the current vertices of a weakly reversible single linkage class network will preserve the properties of weak reversibility and the single linkage class condition. This implies that if there exists a weakly reversible realization consisting of a single linkage class, then the realization generated by $Q$ will also be weakly reversible and consist of a single linkage class. By Theorem~\ref{thm:supp_terminal_linkage}, we conclude~\eqref{alg 1 line 8}.
\end{proof}

The following remark is a direct consequence of Lemma~\ref{lem: algorithm 1 part b}.

\begin{remark}
If Algorithm~\ref{algorithm 1} fails at lines~\ref{algo_1_line:2} or~\ref{algo_1_line:8}, then $\dot{\bx} = \sum\limits_{i=1}^m \bx^{\by_i} \bw_i$ does not admit a weakly reversible realization with a single linkage class.
\end{remark}

\section{Special cases and the implementation of Algorithm~\ref{algorithm 1}}
\label{sec:special case and implementation}

After showing Algorithm~\ref{algorithm 1}, we focus on some special cases and the implementation of the algorithm. We will discuss various properties of weakly reversible realizations consisting of a single linkage class but having different deficiencies, and the corresponding implementation of the algorithm.

\medskip

The following Lemma allows us to compute the deficiency of the realization obtained from Algorithm~\ref{algorithm 1}.

\begin{lemma}
Suppose that the dynamical system $\dot{\bx} = \sum\limits_{i=1}^m \bx^{\by_i} \bw_i$ with $m$ vertices, and the matrix of net reaction vectors $\bW = (\bw_1, \ldots, \bw_m)$ passes Algorithm~\ref{algorithm 1} and outputs a weakly reversible realization consisting of a single linkage class. Then the deficiency of this realization is $m- 1 - \rm{Im}(\bW)$.
\end{lemma}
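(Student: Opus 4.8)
The plan is to compute the deficiency $\delta = m - \ell - s$ directly for the realization $G = (V,E)$ produced by Algorithm~\ref{algorithm 1}. By construction the vertex set is $V = \{\by_1, \ldots, \by_m\}$, so the number of vertices is exactly $m$. Since the algorithm only succeeds at line~\ref{algo_1_line:8} when $\dim(\ker(Q)) = 1$ and $\mathrm{supp}(\ker(Q)) = \{1,\ldots,m\}$, Lemma~\ref{lem: algorithm 1 part a} tells us that the output network is weakly reversible and consists of a single linkage class; hence $\ell = 1$. It therefore remains only to identify the dimension $s = \dim(S)$ of the stoichiometric subspace of $G$ with $\dim(\mathrm{Im}(\bW))$, after which $\delta = m - 1 - \dim(\mathrm{Im}(\bW))$ follows immediately. (I note the statement as written literally says ``$m - 1 - \mathrm{Im}(\bW)$''; the intended quantity is $m - 1 - \dim(\mathrm{Im}(\bW))$, and I would phrase the proof accordingly.)

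The key step is thus to show $\dim(S) = \dim(\mathrm{Im}(\bW))$ for the realization $G$ output by the algorithm. This is exactly Lemma~\ref{lem:wr_ker}: since $G$ is a weakly reversible mass-action system (with some choice of rate constants, e.g.\ all equal to $1$ as in the construction~\eqref{tilde v}) whose net reaction vectors are precisely the columns $\bw_1,\ldots,\bw_m$ of $\bW$ — this is the content of the decomposition~\eqref{eq:mass_action_net_reaction}, and the realization was built to generate $\dot{\bx} = \sum_{i=1}^m \bx^{\by_i}\bw_i$ — Lemma~\ref{lem:wr_ker} gives $\mathrm{Im}(\bW) = S$, whence $s = \dim(S) = \dim(\mathrm{Im}(\bW))$. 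One small point to check carefully is that the net reaction vector of vertex $\by_i$ in the realization $G$ really equals the given $\bw_i$: this holds because $\bB_i \tilde{\bv} = \bw_i$ where $\bB_i$ has columns $\by_k - \by_i$ and $\tilde{\bv} \ge 0$ records the rate constants on the edges out of $\by_i$, so by Definition~\ref{defn:net_reaction_vector} the net reaction vector of $\by_i$ is $\sum_k \tilde{v}_k(\by_k - \by_i) = \bB_i\tilde{\bv} = \bw_i$.

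Putting these together: $\delta = m - \ell - s = m - 1 - \dim(\mathrm{Im}(\bW))$, as claimed. I do not anticipate a serious obstacle here — the argument is essentially an assembly of Lemma~\ref{lem: algorithm 1 part a}, Lemma~\ref{lem:wr_ker}, and the net-reaction-vector decomposition~\eqref{eq:mass_action_net_reaction}. The only point requiring a little care is the bookkeeping that the realization's net reaction vectors coincide with the columns of the input matrix $\bW$ (so that Lemma~\ref{lem:wr_ker} applies to $\bW$ itself rather than to some other matrix of net reaction vectors), and the cosmetic matter of reading ``$\mathrm{Im}(\bW)$'' in the statement as its dimension.
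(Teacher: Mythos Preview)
Your proposal is correct and follows essentially the same route as the paper: invoke Lemma~\ref{lem:wr_ker} to identify $\mathrm{Im}(\bW)$ with the stoichiometric subspace $S$ of the weakly reversible single-linkage-class realization, and then read off $\delta = m - 1 - \dim(\mathrm{Im}(\bW))$. Your version is more detailed than the paper's (which is a two-line proof), in particular your explicit check that the net reaction vectors of the constructed realization coincide with the input columns $\bw_i$ and your remark that ``$\mathrm{Im}(\bW)$'' should be read as its dimension are both on point.
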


\begin{proof}

From Lemma~\ref{lem:wr_ker}, we have $\rm{Im}(W) =S$. Therefore, the deficiency of realization obtained from Algorithm~\ref{algorithm 1} is
\begin{equation} \notag
\delta = m - \ell - s= m- 1 - \rm{Im}(\bW).
\end{equation}
\end{proof}

\subsection{Weakly reversible deficiency zero realizations consisting of a single linkage class }

We first consider the case when a dynamical system admits a weakly reversible deficiency zero realization consisting of a single linkage class. 

It is well known that weakly reversible deficiency zero networks are complex-balanced for any choice of positive rate constants~\cite{horn1972general}. 
In addition, for complex-balanced dynamical systems consisting of a single linkage class, there exists a globally attracting positive steady state within each stoichiometric compatibility class~\cite{anderson2011proof}. This leads to the following Lemma. 

\begin{lemma} 
For a weakly reversible deficiency zero reaction network consisting of a single linkage class, every stoichiometric compatibility class admits a globally attracting positive steady state. 
\end{lemma}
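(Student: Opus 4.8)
The plan is to deduce the statement directly from the two facts recalled immediately before it, with essentially no additional work. Fix any vector of positive rate constants $\bk$ for the network $G$, so that we obtain a mass-action system $(G,\bk)$. Since $G$ is weakly reversible and has deficiency $\delta = m - \ell - s = 0$, the deficiency zero theorem of Horn and Jackson~\cite{horn1972general} guarantees that $(G,\bk)$ is complex-balanced; moreover this holds for \emph{every} choice of positive rate constants, so the conclusion we are after will not depend on the particular $\bk$.

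Next I would use that $G$ consists of a single linkage class. A complex-balanced mass-action system is in particular weakly reversible, and its linkage classes are exactly the connected components of the underlying E-graph $G$; hence $(G,\bk)$ is a complex-balanced system with exactly one linkage class. Applying the global stability result for complex-balanced systems consisting of a single linkage class~\cite{anderson2011proof}, we conclude that within each stoichiometric compatibility class $(\bx_0 + S)\cap\mathbb{R}^n_{>0}$ there is a positive steady state $\tilde{\bx}$ that is a global attractor relative to that class, i.e. $\lim_{t\to\infty}\bx(t) = \tilde{\bx}$ for every solution $\bx(t)$ with $\bx(0)\in(\bx_0+S)\cap\mathbb{R}^n_{>0}$. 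This is precisely the assertion of the Lemma.

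The only items that require care — rather than real difficulty — are checking that the hypotheses of the two cited theorems are met. One should observe that the notion of deficiency in our Definition matches the hypothesis of~\cite{horn1972general}, and that "single linkage class'' here coincides with the single linkage class hypothesis of~\cite{anderson2011proof}, since linkage classes are by definition the connected components of $G$. One should also note that the existence of a positive complex-balanced steady state in each stoichiometric compatibility class is itself guaranteed by the deficiency zero theorem, so the standing hypothesis of~\cite{anderson2011proof} that such an equilibrium exists is automatically satisfied. In short, there is no genuine obstacle: the content of the Lemma is entirely inherited from~\cite{horn1972general} and~\cite{anderson2011proof}.
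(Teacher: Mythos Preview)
Your proposal is correct and matches the paper's approach exactly: the paper does not give a formal proof but simply states that the Lemma follows from combining the deficiency zero theorem~\cite{horn1972general} (weakly reversible deficiency zero $\Rightarrow$ complex-balanced for all rate constants) with Anderson's global attractor result for single linkage class complex-balanced systems~\cite{anderson2011proof}. Your write-up merely makes explicit the hypothesis-checking that the paper leaves to the reader.
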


This is our primary motivation for finding weakly reversible deficiency zero realizations consisting of a single linkage class. 
Now we state the upcoming Lemma that relates these realizations to the existence of a vector in line~\ref{algo_1_line:2} of Algorithm~\ref{algorithm 1}.

\begin{lemma}
\label{lem:single_linkage_kernel 0}

Consider a weakly reversible deficiency zero reaction network $G$ consisting of a single linkage class $L = \{\by_1, \ldots, \by_m \}$. Let $\{\bw_1, \ldots, \bw_m \}$ denote the net reaction vectors corresponding to these vertices. Define the matrix $\bB_i \in \mathbb{R}_{n \times m}$, with $k^{\rm th}$ column $B_{i, k} := (\by_k -\by_i)$ for $1 \leq k \leq m$. For each vertex $\by_i \in L$, there exists a unique vector $\bv\in \mathbb{R}^{m}_{\geq 0}$, such that 
\begin{equation} \label{alg 1 in 0}
\bB_i \bv = \bw_i, \ \rm{and } \ \bv_i = 1.
\end{equation}
\end{lemma}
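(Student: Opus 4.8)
The plan is to separate the claim into existence and uniqueness, noting that uniqueness is really a rank computation fed by the deficiency-zero hypothesis, while existence is a direct consequence of the definition of net reaction vectors.

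For existence, I would exhibit the vector built from the rate constants of $G$. Fixing rate constants so that $\bw_i = \sum_{\by_i \to \by_j \in E} k_{\by_i \to \by_j}(\by_j - \by_i)$, define $\bv \in \mathbb{R}^{m}_{\geq 0}$ by $v_j = k_{\by_i \to \by_j}$ when $\by_i \to \by_j \in E$, $v_j = 0$ when $j \neq i$ and $\by_i \to \by_j \notin E$, and $v_i = 1$. Since the $i$-th column of $\bB_i$ is $\by_i - \by_i = \mathbf{0}$, the value assigned to $v_i$ does not affect the product, so $\bB_i \bv = \sum_{k} v_k (\by_k - \by_i) = \bw_i$, and $\bv$ is nonnegative with $v_i = 1$. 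Hence a vector as in \eqref{alg 1 in 0} exists.

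For uniqueness, the key observation is that the single-linkage-class and deficiency-zero hypotheses force $\mathrm{rank}(\bB_i) = m-1$. Indeed, $\mathrm{Im}(\bB_i) = \mathrm{span}\{\by_k - \by_i : 1 \le k \le m\}$, and because $G$ is connected this span equals the stoichiometric subspace $S$ (each reaction vector $\by' - \by$ equals $(\by' - \by_i) - (\by - \by_i)$, and conversely each $\by_k - \by_i$ telescopes along an undirected path in $G$); alternatively one may invoke Lemma~\ref{lem:wr_ker} directly, since $G$ is weakly reversible. From $\delta = m - \ell - s = 0$ with $\ell = 1$ we get $s = \dim S = m-1$, so $\mathrm{rank}(\bB_i) = m-1$ and $\dim\ker(\bB_i) = 1$. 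The standard basis vector $\be_i$ lies in $\ker(\bB_i)$ (again because the $i$-th column of $\bB_i$ vanishes) and is nonzero, so $\ker(\bB_i) = \mathrm{span}\{\be_i\}$. Consequently, if $\bv$ and $\bv'$ both satisfy $\bB_i \bv = \bB_i \bv' = \bw_i$, then $\bv - \bv' \in \ker(\bB_i) = \mathrm{span}\{\be_i\}$, i.e. $\bv - \bv' = c\,\be_i$ for some scalar $c$, and the normalization $v_i = v_i' = 1$ forces $c = 0$, whence $\bv = \bv'$.

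The only step with genuine content — and the one I expect to require the most care — is pinning down that $\dim S = m-1$ under the stated hypotheses, i.e. correctly translating "deficiency zero, single linkage class" into "the shifted complex vectors span a hyperplane of the right dimension"; once that is in hand, everything else is linear-algebra bookkeeping. I would also remark that nonnegativity of $\bv$ is never used in the uniqueness argument — it is purely a linear statement — and is only needed to certify that the existence construction yields an admissible vector.
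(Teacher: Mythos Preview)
Your proposal is correct and follows essentially the same approach as the paper: both compute $\dim S = m-1$ from the deficiency-zero, single-linkage-class hypotheses, identify $\ker(\bB_i) = \mathrm{span}\{\be_i\}$ via rank--nullity and the vanishing $i$-th column, and conclude uniqueness; for existence the paper simply remarks that the $\bw_i$ ``come from the dynamics generated by $G$'' and hence are realizable, while you make this explicit by writing down the vector of rate constants. Your telescoping argument for $\mathrm{Im}(\bB_i) = S$ is a welcome clarification that the paper leaves implicit (its own reference to Lemma~\ref{lem:wr_ker} here is just as loose as yours, since that lemma concerns $\bW$ rather than $\bB_i$).
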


\begin{proof}

For each vertex $\by_i \in L$, we write the stoichiometric subspace as 
\begin{equation} \notag
S = \text{span} \{\by_k - \by_i\}^{m}_{k=1}.
\end{equation}
Since $G$ has a single linkage class and deficiency zero, we obtain
\begin{equation} \notag
    0 = m - 1 - \dim (S).
\end{equation}
This shows that 
\begin{equation} \notag
\dim (\text{span} \{\by_k - \by_i\}^{m}_{k=1}) = \dim (S) = m - 1.
\end{equation}
From weak reversibility and Lemma~\ref{lem:wr_ker}, we deduce that  
\begin{equation} \notag
\dim (\ker (\bB_i)) = m - \dim(\rm{Im}(\bB_i)) = m - \dim (S) = 1.
\end{equation}
Since $\bB_{i, i} = \mathbf{0}$, it is easy to see that 
$\mathbf{e_i} \in \ker (\bB_i)$ where $\mathbf{e_i}$ represents the unit vector in the $i$-th coordinate. Then, we have 
\begin{equation} \label{dim kernel Bi 0}
\ker (\bB_i) \cap \{\bz\in\mathbb{R}^m: \bz_i = 0\} = \mathbf{0}.
\end{equation}
Since the net reaction vectors $\{\bw_i\}^m_{i=1}$ come from the dynamics generated by network $G$, all of them can be realized.
Applying~\eqref{dim kernel Bi 0}, we conclude that Equation~\eqref{alg 1 in 0} has a unique solution for each vertex $y_i \in L$.
\end{proof}

\begin{remark}
It is worth mentioning that if the dynamical system $\dot{\bx} = \sum_{i=1}^m \bx^{\by_i} \bw_i$ admits a weakly reversible realization consisting of a single linkage class $L$, such that for each vertex $y_i \in L$, the Equation~\ref{algo_1_line:2} in Algorithm~\ref{algorithm 1} has a unique solution, such realization still can have a positive deficiency. For example, the network in Example~\ref{ex: alg pass with rmk} has a unique solution to Equation~\ref{algo_1_line:2} for each vertex $y_i \in L$, but it has deficiency one.
\end{remark}

\begin{example}
\label{ex: alg pass def 0}
Consider the matrices corresponding to the source vertices and net reaction vectors given by
\begin{equation}
\bY_s =  \begin{pmatrix}
1 & 2 & 2 \\
0 & 0 & 1
\end{pmatrix}, \ \text{and } \
\bW =  \begin{pmatrix}
1 & 0 & -1 \\
0 & 1 & -1
\end{pmatrix}.
\end{equation}
respectively, which are inputs to Algorithm \ref{algorithm 1}. These inputs generate the following system of differential equations
\begin{equation} \label{eq: alg pass def 0}
\begin{split}
\dot{x} & = x - x^2y, \\
\dot{y} & = x^2 - x^2y.
\end{split}
\end{equation}
We have $n = 2$ for two state variables $x, y$, and $m = 3$ for two distinct monomials.

Next, applying line \ref{algo_1_line:2} in algorithm on $\bY_s = (\by_1, \by_2, \by_3)$, we obtain 
\begin{equation} \notag
\begin{split}
\bB_1 =  \begin{pmatrix}
0 & 1 & 1 \\
0 & 0 & 1
\end{pmatrix},& \ \ 
\bB_2 =  \begin{pmatrix}
-1 & 0 & 0 \\
 0 & 0 & 1
\end{pmatrix}, \ \
\bB_3 =  \begin{pmatrix}
-1 &  0 & 0 \\
-1 & -1 & 0
\end{pmatrix},
\end{split}
\end{equation}
and
\begin{equation} \notag
\bv^*_1 = (1,1,0)^{T}, \ \
\bv^*_2 = (0,1,1)^{T}, \ \
\bv^*_3 = (1,0,1)^{T},
\end{equation}
where $\bv^*_i \in \mathbb{R}^{3}_{\geq 0}$ and $\bB_i \bv^*_i = \bw_i$, for $i = 1, 2, 3$.

Then, we can compute that for $i = 1, 2, 3$,
\begin{equation}\label{eq:example_1_kernel}
\ker (\bB_i) \cap \{\bz\in\mathbb{R}^m: \bz_i = 0\} = \mathbf{0},
\end{equation}
and derive
\begin{equation} \notag
S_1 = \{ 1,2 \}, \ \
S_2 = \{ 2,3 \}, \ \
S_3 = \{ 1,3 \}.
\end{equation}
Note that $\rm{dim(ker}(B_i))=1$. Together with Equation~\eqref{eq:example_1_kernel}, we deduce that $\bv^*_i$ is the unique solution to the equations $\bB_i \bv = \bw_i$ and $\bv_i = 1$ for $i = 1, 2, 3$. Therefore, we do not need to execute the inner loop given by lines \ref{algo_1_line:9}-\ref{algo_1_line:17} in Algorithm~\ref{algorithm 1}. 

Following line~\ref{algo_1_line:7}, we construct the Kirchoff matrix:
\begin{equation} \notag
Q =  \begin{pmatrix}
-1 & 0 & 1 \\
1 & -1 & 0 \\
0 & 1 & -1 \\
\end{pmatrix}.
\end{equation}
It is easy to check that $\ker(Q) = \text{span} \{(1,1,1)^{\intercal}\}$, and we deduce that
\begin{equation} \notag
\dim (\ker(Q)) = 1, \ \text{and } \
\text{supp} (\ker(Q)) = \{ 1,2,3 \}.
\end{equation}
Therefore, we conclude that the system given by \eqref{eq: alg pass def 0} admits a weakly reversible realization with a single linkage class, whose E-graph is shown in Figure~\ref{fig:alg pass def 0}.

\begin{figure}[H]
\centering
\includegraphics[scale=0.5]{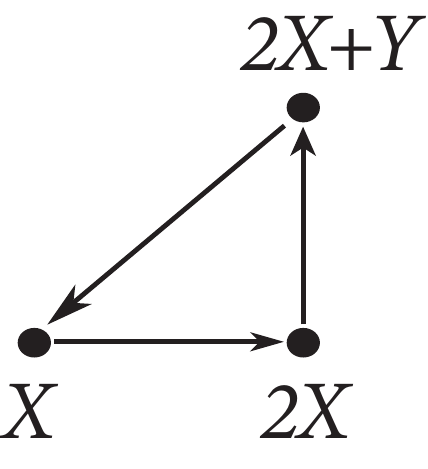}
\caption{The deficiency zero  mass-action system from Example \ref{ex: alg pass def 0}}
\label{fig:alg pass def 0}
\end{figure}

\end{example}

\begin{example}
Consider the matrices of source vertices and net reaction vectors given by
\begin{equation}
\bY_s =  \begin{pmatrix}
1 & 2 \\
0 & 0
\end{pmatrix}, \ \text{and } \
\bW = \begin{pmatrix}
-1 & 1 \\
0 &  0
\end{pmatrix}.
\end{equation}
respectively, which are inputs to Algorithm \ref{algorithm 1}. These inputs generate the following system of differential equations
\begin{equation} \label{ex: alg fail}
\begin{split}
\dot{x} & = - x +x^2, \\
\dot{y} & = 0.
\end{split}
\end{equation}
We have $n = 2$ for two state variables $x, y$, and $m = 2$ for two distinct monomials. 

Next, following line \ref{algo_1_line:2} in algorithm on $\bY_s = (\by_1, \by_2)$, we obtain 
\begin{equation}
\bB_1 =  \begin{pmatrix}
0 & 1 \\
0 & 0
\end{pmatrix}, \ \text{and } \ 
\bB_2 =  \begin{pmatrix}
-1 & 0 \\
0 & 0
\end{pmatrix}.
\end{equation}
However, there does not exist a positive vector $\bv^*$, which solves $\bB_1 \bv^* = \bw_1$. Therefore, there exists no weakly reversible realization consisting of a single linkage class that generates the dynamical system given by Equation~\ref{ex: alg fail}.
\end{example}

\subsection{Weakly reversible deficiency one realizations consisting of a single linkage class}

In this section, we analyze the case when a dynamical system admits a weakly reversible deficiency one realization consisting of a single linkage class. 

If a reaction network satisfies the conditions of the Deficiency One Theorem, then every stoichiometric compatibility class contains a unique positive steady state (if it exists)~\cite{feinberg1995existence,feinberg2019foundations}.
On the other hand, for any weakly reversible network, there always exists a positive steady state within every stoichiometric compatibility class~\cite{boros2019existence}.  
It is easy to check that every weakly reversible deficiency one network with a single linkage class must satisfy all conditions in the Deficiency One Theorem. Therefore, we get the following Lemma.

\begin{lemma} 
For a weakly reversible deficiency one network consisting of a single linkage class, there exists a unique positive steady state within every stoichiometric compatibility class.
\end{lemma}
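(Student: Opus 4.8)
The plan is to reduce the claim to the Deficiency One Theorem of Feinberg, together with a known existence result for weakly reversible networks. The key observation is that a weakly reversible network with deficiency one and a single linkage class trivially satisfies every hypothesis of the Deficiency One Theorem, so uniqueness of the positive steady state within each stoichiometric compatibility class follows immediately; combining this with the fact that weakly reversible networks always possess at least one positive steady state in each stoichiometric compatibility class gives existence and uniqueness.

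First I would recall the precise statement of the Deficiency One Theorem: if a reaction network has linkage classes $L_1,\dots,L_\ell$ with individual deficiencies $\delta_1,\dots,\delta_\ell$ satisfying (i) $\delta_i\le 1$ for each $i$, (ii) $\sum_{i=1}^\ell \delta_i = \delta$ (the deficiency of the whole network), and (iii) each linkage class contains exactly one terminal strong linkage class, then every stoichiometric compatibility class contains at most one positive steady state, and if the network is weakly reversible then exactly one. For our setting, $\ell = 1$, so there is a single linkage class $L_1 = V$ with $\delta_1 = \delta = 1$; condition (i) holds since $\delta_1 = 1 \le 1$, condition (ii) holds trivially as $\delta_1 = \delta$, and condition (iii) holds because, by the Remark following the definition of weak reversibility in the excerpt, a weakly reversible network has every linkage class equal to a single terminal strong linkage class. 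Hence all three hypotheses are satisfied.

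Next I would invoke weak reversibility twice. On the one hand, the Deficiency One Theorem, applied to a weakly reversible network meeting its hypotheses, already yields existence and uniqueness of a positive steady state in each stoichiometric compatibility class; on the other hand, one can also cite the independent result of Boros \cite{boros2019existence} that every weakly reversible mass-action system has a positive steady state within each stoichiometric compatibility class, which makes the existence half of the conclusion robust and self-contained regardless of which form of the Deficiency One Theorem one uses. Either way, combining the ``at most one'' conclusion with ``at least one'' gives ``exactly one'' within every stoichiometric compatibility class.

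The main obstacle, if any, is purely bookkeeping rather than mathematical: one must be careful that the notion of deficiency used in the Deficiency One Theorem (which is stated per linkage class) agrees with the global deficiency $\delta = m - \ell - s$ when $\ell = 1$, and that ``single linkage class that is strongly connected'' indeed forces the unique-terminal-strong-linkage-class hypothesis. Both points are immediate here: with $\ell=1$ the linkage-class deficiency equals the network deficiency, and weak reversibility makes the lone linkage class its own terminal strong linkage class. Thus the proof amounts to verifying the three hypotheses of the Deficiency One Theorem and quoting it, then quoting the existence result for weakly reversible networks; no genuinely hard step arises.
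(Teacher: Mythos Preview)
Your proposal is correct and follows essentially the same approach as the paper: the paper also derives the lemma by noting that a weakly reversible deficiency one network with a single linkage class satisfies all hypotheses of the Deficiency One Theorem (giving uniqueness), and then invokes Boros' existence result for weakly reversible networks (giving existence). Your writeup is in fact more careful than the paper's, since you explicitly verify conditions (i)--(iii) of the Deficiency One Theorem rather than simply asserting that they hold.
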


This explains the importance of discovering weakly reversible deficiency one realizations with a single linkage class.
Moreover, we introduce the next Lemma showing the existence of a vector in line~\ref{algo_1_line:2} of Algorithm~\ref{algorithm 1}.

\begin{lemma}
\label{lem:single_linkage_kernel 1}

Consider a weakly reversible and deficiency one reaction network $G$ consisting of a single linkage class given by $L = \{\by_1, \ldots, \by_m \}$. Let $\{\bw_1, \ldots, \bw_m \}$ denote the net reaction vectors corresponding to these vertices. Define the matrix $\bB_i \in \mathbb{R}_{n \times m}$, with $k^{\rm th}$ column $B_{i, k} := (\by_k -\by_i)$ for $1 \leq k \leq m$. For each vertex $\by_i \in L$, the following system
\begin{equation} \label{alg 1 in 1}
\begin{split}
\bB_i \bv & = \bw_i, \\ 
\ \bv_i = 1,  \ & \text{and }  \
\bv \in \mathbb{R}^{m}_{\geq 0}.
\end{split}
\end{equation} 
has at most two linearly independent solutions.
\end{lemma}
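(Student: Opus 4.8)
The plan is to count dimensions of the relevant solution space exactly as in the proof of Lemma~\ref{lem:single_linkage_kernel 0}, but now allowing the deficiency to contribute one extra dimension. Fix a vertex $\by_i \in L$ and recall $\bB_i \in \mathbb{R}_{n \times m}$ with $k^{\rm th}$ column $\by_k - \by_i$. Since $\rm{Im}(\bB_i) = \text{span}\{\by_k - \by_i\}_{k=1}^m = S$, weak reversibility together with Lemma~\ref{lem:wr_ker} gives $\dim(\rm{Im}(\bB_i)) = \dim(S) = s$, hence $\dim(\ker(\bB_i)) = m - s$. Because the network has a single linkage class ($\ell = 1$) and deficiency one, $1 = m - 1 - s$, so $s = m - 2$ and therefore $\dim(\ker(\bB_i)) = 2$. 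This is the crux: the affine solution set of $\bB_i \bv = \bw_i$ is a coset of a $2$-dimensional subspace.

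Next I would pin down the affine structure. Since the net reaction vectors come from the dynamics generated by $G$, the equation $\bB_i \bv = \bw_i$ has at least one solution $\bv^0 \in \mathbb{R}^m_{\geq 0}$; in fact the realization $G$ itself provides one with $\bv^0_i = 1$ (after normalizing by the total outgoing rate at $\by_i$), coming from $\bw_i = \sum_{\by_i \to \by_j \in E} k_{\by_i \to \by_j}(\by_j - \by_i)$. The full solution set to $\bB_i \bv = \bw_i$ is then $\bv^0 + \ker(\bB_i)$, an affine plane of dimension $2$. Note $\be_i \in \ker(\bB_i)$ since the $i^{\rm th}$ column of $\bB_i$ is $\mathbf 0$; imposing $\bv_i = 1$ slices this plane with the hyperplane $\{\bv_i = 1\}$, which (since $\be_i$ is transverse to it) cuts the dimension down by one, leaving a $1$-dimensional affine line of solutions to $\bB_i \bv = \bw_i$, $\bv_i = 1$.

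Finally I would intersect this line with the closed cone $\mathbb{R}^m_{\geq 0}$. An affine line in $\mathbb{R}^m$ meets the convex set $\mathbb{R}^m_{\geq 0}$ in a (possibly empty, possibly unbounded) closed segment; parametrizing the line as $\bv^0 + t\,\bu$ for a direction $\bu$ (with $\bu_i = 0$), the feasible set of $t$ is an interval $[t_-, t_+]$ (allowing $t_\pm = \pm\infty$). The extreme points of the solution set $\{\bB_i \bv = \bw_i,\ \bv_i = 1,\ \bv \in \mathbb{R}^m_{\geq 0}\}$ are then the one or two endpoints of this segment, and every solution is a convex combination of them; hence there are at most two linearly independent solutions. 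The main obstacle is being careful about degenerate cases — the line could lie entirely inside the cone's boundary, or be unbounded in one or both directions — but in every case the solution set is contained in the affine span of at most two points, which is exactly the claim; I would phrase the conclusion in terms of the dimension of the affine hull of the feasible set being at most one, so that "at most two linearly independent solutions" follows immediately.
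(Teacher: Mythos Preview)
Your proposal is correct and follows essentially the same approach as the paper: both count dimensions via the deficiency formula and Rank--Nullity to obtain $\dim(\ker(\bB_i)) = 2$, then use $\be_i \in \ker(\bB_i)$ to cut the affine solution set of $\{\bB_i\bv = \bw_i,\ \bv_i = 1\}$ down to a line, from which the bound of two linearly independent solutions follows. Your final paragraph analyzing the intersection with $\mathbb{R}^m_{\geq 0}$ is more careful than the paper's terse ``the conclusion follows,'' but note it is not strictly necessary---any subset of a one-dimensional affine line already lies in a linear subspace of dimension at most two, regardless of the nonnegativity constraint (and your parenthetical about normalizing by the total outgoing rate is unneeded: since the $i$th column of $\bB_i$ is zero, you can simply set $\bv^0_i = 1$ freely).
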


\begin{proof}

For each vertex $y_i \in L$, we denote the stoichiometric subspace by $S$, such that
\begin{equation}
S = \text{span} \{\by_k - \by_i\}^{m}_{k=1}.
\end{equation}
Since $G$ has a single linkage class and deficiency one, we obtain
\begin{equation}
    \delta = 1 = m - 1 - \dim (S).
\end{equation}
This shows that 
\begin{equation}
    \dim (\text{span} \{\by_k - \by_i\}^{m}_{k=1}) = \dim (S) = m - 2.
\end{equation}
From the Rank-Nullity Theorem, $\dim (\ker (\bB_i)) + \dim(\rm{Im}(\bB_i)) = m$. Since $G$ is weakly reversible, using Lemma~\ref{lem:wr_ker}, we get 
\begin{equation}
\dim(\rm{Im}(\bB_i)) = \dim (S),
\end{equation}
thus we obtain that $\dim (\ker (\bB_i)) = m - \dim (S) = 2$.

Since $\bB_{i, i} = \mathbf{0}$, we deduce $\ker (\bB_i)$ has one vector $\bu\in\mathbb{R}^m$ such that $\bu_i \neq 0$.
Then, we have 
\begin{equation} \label{dim kernel Bi 1}
\dim (\ker (\bB_i) \cap \{\bz\in\mathbb{R}^m: \bz_i = 0\}) = 1.
\end{equation}
Since the net reaction vectors $\{\bw_i\}^m_{i=1}$ come from the dynamics generated by the network $G$, all of them can be realized. Together with~\eqref{dim kernel Bi 1}, the conclusion follows.
\end{proof}

\begin{example}
\label{ex: alg pass with rmk}

Consider the matrices corresponding to the source vertices and net reaction vectors given by
\begin{equation}
\bY_s =  \begin{pmatrix}
1 & 2 & 2 & 1 \\
0 & 0 & 1 & 1
\end{pmatrix}, \ \text{and } \
\bW =  \begin{pmatrix}
1  & -1  & 0  & 0\\
0 	& 0 & 1  & -1
\end{pmatrix}.
\end{equation}
respectively, which are inputs to Algorithm \ref{algorithm 1}. These inputs generate the following system of differential equations
\begin{equation} \label{eq: alg pass with rmk}
\begin{split}
\dot{x} & = x - x^2y, \\
\dot{y} & = x^2 - xy.
\end{split}
\end{equation}
We have $n = 2$ for two state variables $x, y$, and $m = 4$ for four distinct monomials. 

Next, applying line \ref{algo_1_line:2} in algorithm on $\bY_s = (\by_1, \by_2, \by_3, \by_4)$, we obtain 
\begin{equation} \notag
\begin{split}
\bB_1 =  \begin{pmatrix}
0 & 1 & 1 & 0 \\
0 & 0 & 1 & 1
\end{pmatrix},& \ \ 
\bB_2 =  \begin{pmatrix}
-1 & 0 & 0 & -1 \\
 0 & 0 & 1 & 1
\end{pmatrix}, \\
\bB_3 =  \begin{pmatrix}
-1 & 0 & 0 & -1 \\
-1 & -1 & 0 & 0
\end{pmatrix},& \ \ 
\bB_4 =  \begin{pmatrix}
0 & 1 & 1 & 0 \\
-1 & -1 & 0 & 0
\end{pmatrix},
\end{split}
\end{equation}
and
\begin{equation} \notag
\bv^*_1 = (1,1,0,0)^{T}, \ \bv^*_2 = (0,1,1,0)^{T}, \
\bv^*_3 = (0,0,1,1)^{T}, \ \bv^*_4 = (1,0,0,1)^{T},
\end{equation}
where $\bv^*_i \in \mathbb{R}^{4}_{\geq 0}$ and   $\bB_i \bv^*_i = \bw_i$, for $1 \leq i \leq 4$.

Then, we get the initial $S_i$ for $1 \leq i \leq 4$,
\begin{equation} \notag
S_1 = \{ 1,2 \}, \ \
S_2 = \{ 2,3 \}, \ \
S_3 = \{ 3,4 \}, \ \
S_4 = \{ 1,4 \}.
\end{equation}
After executing the inner loop in lines \ref{algo_1_line:9}-\ref{algo_1_line:17}, we do not have any update on $S_i$.

Now we follow line~\ref{algo_1_line:7}, and construct the Kirchoff matrix:
\begin{equation} \notag
Q =  \begin{pmatrix}
-1 & 0 & 0 & 1 \\
1 & -1 & 0 & 0 \\
0 & 1 & -1 & 0 \\
0 & 0 & 1 & -1
\end{pmatrix}.
\end{equation}
It is easy to check that $\ker(Q) = \text{span} \{(1,1,1,1)^{\intercal}\}$, which shows
\begin{equation} \notag
\dim (\ker(Q)) = 1, \ \text{and } \
\text{supp} (\ker(Q)) = \{ 1,2,3,4 \}.
\end{equation}
Therefore, we conclude that~\eqref{eq: alg pass with rmk} admits a weakly reversible realization with a single linkage class, whose E-graph is shown in Figure \ref{fig:alg pass with rmk}.

\begin{figure}[H]
\centering
\includegraphics[scale=0.5]{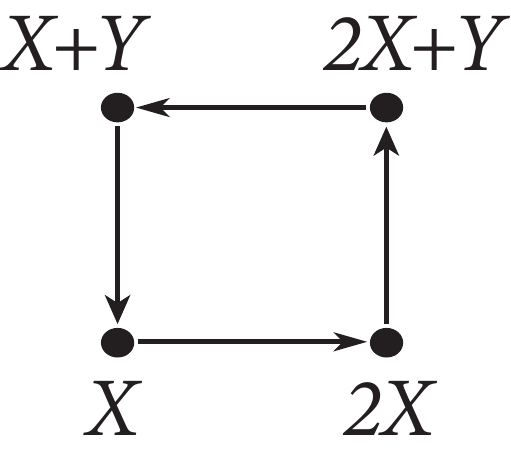}
\caption{The deficiency one  mass-action system from Example \ref{ex: alg pass with rmk}}
\label{fig:alg pass with rmk}
\end{figure}

\end{example}

\begin{example}
\label{ex: alg pass}
Consider the matrices corresponding to the source vertices and net reaction vectors given by
\begin{equation}
\bY_s =  \begin{pmatrix}
1 & 2 & 3
\end{pmatrix}, \ \text{and } \
\bW =  \begin{pmatrix}
1  & 1 & -1
\end{pmatrix}.
\end{equation}
respectively, which are inputs to Algorithm \ref{algorithm 1}. These inputs generate the following differential equation
\begin{equation} \label{eq: alg pass}
\dot{x} = x + x^2 - x^3.
\end{equation}
We have $n = 1$ for the state variables $x$, and $m = 3$ for three distinct monomials. 

Next, applying line \ref{algo_1_line:2} in algorithm on $\bY_s = (\by_1, \by_2, \by_3)$, we obtain 
\begin{equation} \notag
\bB_1 =  \begin{pmatrix}
0 & 1 & 2
\end{pmatrix}, \ \ 
\bB_2 =  \begin{pmatrix}
-1 & 0 & 1
\end{pmatrix}, \ \
\bB_3 =  \begin{pmatrix}
-2 & -1 & 0
\end{pmatrix},
\end{equation}
and
\begin{equation} \notag
\bv^*_1 = (1,1,0)^{T}, \ \
\bv^*_2 = (0,1,1)^{T}, \ \
\bv^*_3 = (0,1,1)^{T},
\end{equation}
where $\bv^*_i \in \mathbb{R}^{3}_{\geq 0}$ and $\bB_i \bv^*_i = \bw_i$, for $i = 1, 2, 3$.

Then, we get the initial $S_i$ for $i = 1, 2, 3$,
\begin{equation} \notag
S_1 = \{ 1,2 \}, \ \
S_2 = \{ 2,3 \}, \ \
S_3 = \{ 2,3 \}.
\end{equation}
Following the inner loop in lines \ref{algo_1_line:9}-\ref{algo_1_line:17}, we can compute that 
\begin{equation} \notag
\begin{split}
& \bB_1 \bv^1 = \bw_1, \ \text{with } \ \bv^1 = (1,0,1/2)^{\intercal},
\\& \bB_2 \bv^2 = \bw_2, \ \text{with } \ \bv^2 = (1,1,2)^{\intercal},
\\& \bB_3 \bv^3 = \bw_3, \ \text{with } \ \bv^3 = (1/2,0,1)^{\intercal}.
\end{split}
\end{equation}
After updating $S_i$ with $\bv^i$ for $i = 1, 2, 3$, we derive 
\begin{equation} \notag
S_1 = S_2 = S_3 = \{ 1,2,3 \}.
\end{equation}

Now we follow line \ref{algo_1_line:7}, and construct the Kirchoff matrix:
\begin{equation} \notag
Q =  \begin{pmatrix}
-2 & 1 & 1 \\
1 & -2 & 1 \\
1 & 1 & -2 \\
\end{pmatrix}.
\end{equation}
It is easy to check that $\ker(Q) = \text{span} \{(1,1,1)^{\intercal}\}$, and we deduce that
\begin{equation} \notag
\dim (\ker(Q)) = 1, \ \text{and } \
\text{supp} (\ker(Q)) = \{ 1,2,3 \}.
\end{equation}
Therefore, we conclude \eqref{eq: alg pass} admits a weakly reversible realization with a single linkage class, whose E-graph is shown in Figure \ref{fig:alg pass}.

\begin{figure}[H]
\centering
\includegraphics[scale=0.5]{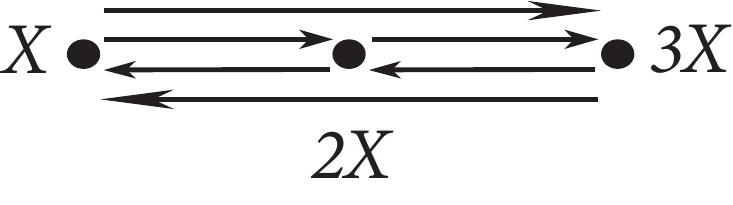}
\caption{The deficiency one mass-action system from Example \ref{ex: alg pass}}
\label{fig:alg pass}
\end{figure}

\end{example}

\subsection{Weakly reversible realizations with a single linkage class have high deficiency}

Now we list some properties of weakly reversible realizations of arbitrary positive deficiency consisting of a single linkage class. 

Our motivation comes from autocatalytic networks, which are often associated with the context of the origin of life models~\cite{deshpande2014autocatalysis,hordijk2010autocatalytic,hordijk2004detecting,hordijk2012structure}. Owing to their autocatalytic nature, the concentrations of species in these networks can go unbounded. The crucial component in their analysis is the dynamics corresponding to the relative concentration of species. Given species $X_1, X_2,\ldots, X_n$ with concentrations $x_1, x_2, \ldots, x_n$, the relative concentration corresponding to species $X_i$ is given by $(\sum_{i=1}^n x_i)^{-1} x_i$. It can be shown that for certain autocatalytic networks, the dynamics corresponding to the relative concentration of species can be generated by a reaction network~\cite{craciun2022autocatalytic}. 

In particular, we present an example of an autocatalytic network such that the network corresponding to the relative concentration of species is weakly reversible and consists of a single linkage class. Table~\ref{table:autocatalytic} illustrates this fact. The left column of the table describes the reactions in $G$, which is an autocatalytic network. The right column of the table describes the reactions in $\tilde{G}$, which is the network corresponding to the relative concentration of species in $G$. The reactions in $\tilde{G}$ are obtained in the following way: for every reaction in $G$, there exists a corresponding pair of reactions in $\tilde{G}$ that is generated using  ~\cite[Theorem 3.5]{craciun2022autocatalytic}. In particular, the reactions in $\tilde{G}$ are generated by adding all possible species to the reactants of the corresponding reaction in $G$.

\begin{table}[H]
\caption{}
\centering
\begin{tabular}{|c|c|}
\hline
\rule{0pt}{20pt} Reactions in $G$ & Reactions in $\tilde{G}$  
\\ [2ex]
\hline
\begin{tabular}{l}
$X_1 + X_2 \xrightarrow[]{k_1} 2X_1 + X_2$
\end{tabular} &
\begin{tabular}{l}
$X_1 + 2X_2 \xrightarrow{k_1} 2X_1 + X_2$ \\
$X_1 + X_2 + X_3 \xrightarrow{k_1} 2X_1 + X_2$
\end{tabular} 
\\
\hline 
\begin{tabular}{l}
$X_2 + X_3 \xrightarrow[]{k_2} 2X_2 + X_3$
\end{tabular} &
\begin{tabular}{l}
$X_2 + 2X_3 \xrightarrow{k_2} 2X_2 + X_3$ \\
$X_1 + X_2 + X_3 \xrightarrow{k_2} 2X_2 + X_3$
\end{tabular} 
\\
\hline
\begin{tabular}{l}
$X_3 + X_1 \xrightarrow[]{k_3} 2X_3 + X_1$ 
\end{tabular} & 
\begin{tabular}{l}
$2X_1 + X_3 \xrightarrow{k_3} 2X_3 + X_1$ \\
$X_1 + X_2 + X_3 \xrightarrow{k_3}2X_3 + X_1$ 
\end{tabular}
\\
\hline
\begin{tabular}{l}
$X_1 + X_2 \xrightarrow[]{k_4} X_1 + X_2 + X_3$ 
\end{tabular} & 
\begin{tabular}{l}
$X_1 + 2X_2 \xrightarrow{k_4} X_1 + X_2 +X_3$ \\
$2X_1 + X_2 \xrightarrow{k_4} X_1 + X_2 +X_3$
\end{tabular}
\\
\hline
\begin{tabular}{l}
$X_2 + X_3 \xrightarrow[]{k_5} X_1 + X_2 + X_3$
\end{tabular} & 
\begin{tabular}{l}
$X_2 + 2X_3 \xrightarrow{k_5} X_1 + X_2 +X_3$ \\
$2X_2 + X_3 \xrightarrow{k_5} X_1 + X_2 +X_3$ 
\end{tabular}
\\
\hline
\begin{tabular}{l}
$X_1 + X_3 \xrightarrow[]{k_6} X_1 + X_2 + X_3$ 
\end{tabular} & 
\begin{tabular}{l}
$X_1 + 2X_3 \xrightarrow{k_6} X_1 + X_3 +X_3$ \\ 
$2X_1 + X_3 \xrightarrow{k_6} X_1 + X_2 +X_3$ 
\end{tabular}
\\
\hline
\end{tabular}
\label{table:autocatalytic}
\end{table}

\medskip

The network $\tilde{G}$ is depicted in Figure~\ref{fig:non_lotka_volterra}.(a).  Note that the deficiency of $\tilde{G}$ is given by $\delta = 7 - 1 - 3 = 3$.  Using some modifications, we can construct a network shown in Figure~\ref{fig:non_lotka_volterra}.(b) which generates the dynamics as Figure~\ref{fig:non_lotka_volterra}.(a).  Figure~\ref{fig:non_lotka_volterra}.(b) is a weakly reversible network consisting of a single linkage class. By~\cite{boros2020permanence}, the dynamics generated by it is permanent. This implies that the dynamics generated by $\tilde{G}$ is also permanent. 

\begin{figure*}[ht]
\centering
\includegraphics[scale=0.4]{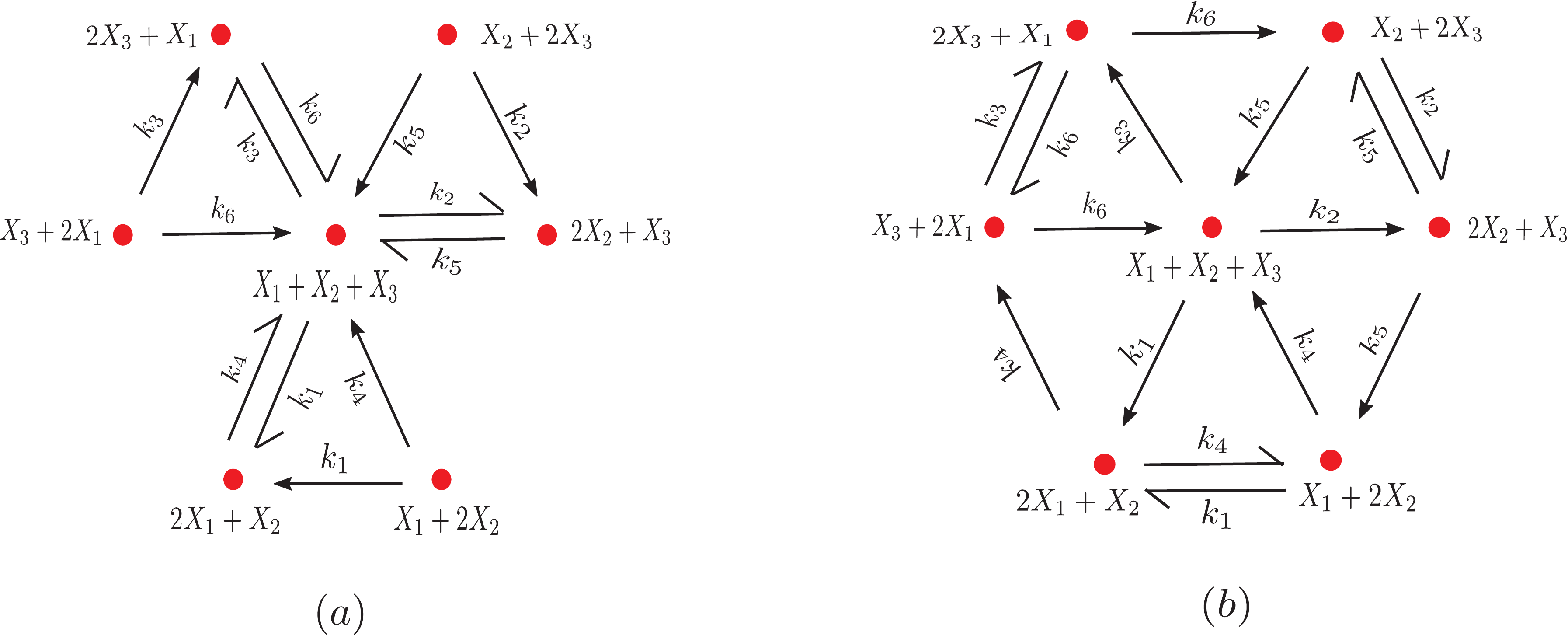}
\caption{(a) The network $\tilde{G}$ corresponds to the relative concentrations of species in network $G$. (b) Splitting certain reactions in (a) gives a weakly reversible network consisting of a single linkage class. The dynamics generated by this network is known to be permanent~\cite{boros2020permanence}.}
\label{fig:non_lotka_volterra}
\end{figure*} 

\subsection{Implementation of Algorithm~\ref{algorithm 1}}
\label{sec:complexity}

In this section, we discuss the implementation aspects of Algorithm~\ref{algorithm 1}. The algorithm is designed to find a weakly reversible realization consisting of a single linkage class for $\dot{\bx} = \sum\limits_{i=1}^m \bx^{\by_i} \bw_i$, and it has three key steps:

\begin{enumerate}
\item Check for the existence of a vector $\bv^* \in \mathbb{R}^{m}_{\geq 0}$, such that for $i = 1, \ldots, m$,
\begin{equation} \notag
\bB_i \bv^* = \bw_i.
\end{equation}

\item Check for the existence of a vector $\bv \in \mathbb{R}^{m}_{\geq 0}$, such that for $i, j = 1, \ldots, m$,
\begin{equation} \notag
\bB_i \bv = \bw_i, \ \text{and } \
\bv_j > 0.
\end{equation} 

\item Check $\dim (\ker(Q)) = 1$, and $\text{supp} (\ker(Q)) = \{1, \ldots, m \}$. \smallskip
\end{enumerate}

\medskip

In step 1, we compute the positive vector solving $\bB_i \bv^* = \bw_i$ and consider the implementation as a sequence of linear programming problems. For $i = 1, \ldots, m$, set the matrix $\bB_i \in \mathbb{R}_{n \times m}$ as in line \ref{alg: Bi},
\begin{equation} \label{step 1 implement}
\begin{array}{llr} 
    \text{Find a vector} & \bx,  \\ 
    \text{subject to}  &  \bB_i \bx = \bw_i, \\ 
      & \bx \geq \mathbf{0}.
\end{array} 
\end{equation}
From Lemma~\ref{lem: algorithm 1 part b}, if there exists some number $1 \leq i \leq m$, such that there is no solution for \eqref{step 1 implement}, then the implementation fails.
Therefore, no weakly reversible realization with a single linkage class exists. 

\medskip

In step 2,  we compute the positive vector solving $\bB_i \bv = \bw_i$ and $\bv_j > 0$. For each $j = 1, \ldots, m$, if $j \notin S_i$, we do the following:
\begin{equation} \label{step 2 implement}
\begin{array}{llr} 
    \text{Find a vector} & \bx,  \\ 
    \text{that maximizes} & \bx_j,  \\ 
    \text{subjected to}  &  \bB_i \bx = \mathbf{0}, \\ 
     & \bx \geq \mathbf{0}, \ \text{and } \ 
     \bx_j \leq 1.
\end{array} 
\end{equation}
It is clear that the solution to~\eqref{step 2 implement} is the desired vector if its $j$-th component is positive. Meanwhile, if the $j$-th component of the solution is zero, then implementation fails. Furthermore, we restrict $\bx_j \leq 1$ to avoid the risk that $\bx_j$ can be arbitrarily large.

Here we explain why adding the restriction on $j$-th component in \eqref{step 2 implement} does not change the solvability of the problem. From $j \notin S_i$, there must exist a vector $\bx^* \in \mathbb{R}^{m}_{\geq 0}$, such that
\begin{equation*}
\bB_i \bx^* = \bw_i, \ \text{and } \ 
\bx^*_j = 0.
\end{equation*}
Suppose there is a vector $\bx \in \mathbb{R}^{m}_{\geq 0}$, which solves
\begin{equation} \label{step 2 implement 2}
\bB_i \bx = \bw_i, \ \text{and } \ 
\bx_j > 0.
\end{equation}
Then we can always find a sufficient small constant $\epsilon$ with $\bx^{\epsilon} := (1-\epsilon )\bx^* + \epsilon \bx$, such that 
\begin{equation} \notag
\bB_i \bx^{\epsilon} = \bw_i, \ \text{and } \
0 < \bx^{\epsilon}_{j} = \epsilon \bx_j \leq 1.
\end{equation}
This implies that if \eqref{step 2 implement 2} admits a solution, there must exist another solution for~\eqref{step 2 implement}.

\medskip

In step 3, the implementation needs a rank-revealing factorization; we need to find a basis of $\ker(Q)$, and then we can check the number of vectors in this basis and their support. This is again solving a linear programming problem.

\section{Discussion}\label{sec:discussion}

Weakly reversible networks consisting of a single linkage class form an important class of networks, owing to the robust properties of the dynamical systems they generate. 
In particular, the dynamics produced by these networks (according to mass-action kinetics) is known to be persistent and permanent for all choices of reaction rate parameters~\cite{gopalkrishnan2014geometric,boros2020permanence}. 

We describe an algorithm that determines if there exists a weakly reversible realization consisting of a single linkage class that generates a given polynomial dynamical system. Our input consists of two matrices: a matrix of source monomials and a matrix containing the corresponding net reaction vectors. The algorithm outputs a {maximal} weakly reversible realization consisting of a single linkage class (if one exists), which generates the dynamical system formed by the inputs. We also describe  approaches for efficient  implementations of this algorithm; in particular, we show that all the key steps in our algorithm reduce to solving simple linear programming problems.

Other approaches for finding weakly reversible realizations of polynomial dynamical systems are based primarily on mixed integer programming methods~\cite{johnston2013computing,szederkenyi2013optimization,rudan2014efficiently,szederkenyiweak2011finding}. The algorithm we describe here uses a simpler \emph{greedy} approach, which works specifically because  we are looking for realizations consisting of a {\em single} linkage class.  

At the same time, our algorithm lays down the foundation for some future work. In particular, extending our algorithm to check the existence of more general realizations (e.g., weakly reversible realizations with {\em multiple} linkage classes that satisfy other desirable properties) is a potential avenue worthy of exploration. More specifically, the problem of finding weakly reversible realizations that satisfy the conditions of the {\em Deficiency One Theorem}~\cite{feinberg2019foundations} is a possibility that will explore in a follow-up paper~\cite{WR_DEF_THM}.

\section*{Acknowledgements} 

This work was supported in part by the National Science Foundation  grant DMS-2051568.

\section*{Data availability}

Data sharing not applicable to this article as no datasets were generated or analyzed during the current study.

\bibliographystyle{unsrt}
\bibliography{Bibliography}

\begin{thebibliography}{10}

\bibitem{Ilyashenko2002}
Yulij Ilyashenko.
\newblock Centennial history of {{Hilbert}}'s 16th problem.
\newblock {\em Bulletin of the American Mathematical Society}, 39(03):301--355,
  2002.

\bibitem{yu2018mathematical}
P.~Yu and G.~Craciun.
\newblock Mathematical {A}nalysis of {C}hemical {R}eaction {S}ystems.
\newblock {\em Isr. J. Chem.}, 58(6-7):733--741, 2018.

\bibitem{CraciunDickensteinShiuSturmfels2009}
Gheorghe Craciun, Alicia Dickenstein, Anne Shiu, and Bernd Sturmfels.
\newblock Toric dynamical systems.
\newblock {\em Journal of Symbolic Computation}, 44(11):1551--1565, 2009.

\bibitem{craciun2008identifiability}
G.~Craciun and C.~Pantea.
\newblock Identifiability of chemical reaction networks.
\newblock {\em J. Math. Chem.}, 44(1):244--259, 2008.

\bibitem{pantea2012persistence}
C.~Pantea.
\newblock On the persistence and global stability of mass-action systems.
\newblock {\em SIAM J. Math. Anal.}, 44(3):1636--1673, 2012.

\bibitem{gopalkrishnan2014geometric}
M.~Gopalkrishnan, E.~Miller, and A.~Shiu.
\newblock A geometric approach to the global attractor conjecture.
\newblock {\em SIAM J. Appl. Dyn. Syst.}, 13(2):758--797, 2014.

\bibitem{craciun2020efficient}
G.~Craciun, J.~Jin, and P.~Yu.
\newblock An efficient characterization of complex-balanced, detailed-balanced,
  and weakly reversible systems.
\newblock {\em SIAM J. Appl. Math.}, 80(1):183--205, 2020.

\bibitem{feinberg1979lectures}
M.~Feinberg.
\newblock Lectures on chemical reaction networks.
\newblock {\em Notes of lectures given at the Mathematics Research Center,
  University of Wisconsin}, page~49, 1979.

\bibitem{feinberg2019foundations}
M.~Feinberg.
\newblock {\em Foundations of chemical reaction network theory}.
\newblock Springer, 2019.

\bibitem{boros2020permanence}
Bal{\'a}zs Boros and Josef Hofbauer.
\newblock Permanence of weakly reversible mass-action systems with a single
  linkage class.
\newblock {\em SIAM J. Appl. Dyn. Syst.}, 19(1):352--365, 2020.

\bibitem{anderson2011proof}
D.~Anderson.
\newblock A proof of the global attractor conjecture in the single linkage
  class case.
\newblock {\em SIAM J. Appl. Math.}, 71(4):1487--1508, 2011.

\bibitem{craciun2013persistence}
G.~Craciun, F.~Nazarov, and C.~Pantea.
\newblock Persistence and permanence of mass-action and power-law dynamical
  systems.
\newblock {\em SIAM J. Appl. Math.}, 73(1):305--329, 2013.

\bibitem{anderson2020classes}
D.~Anderson, J.~Brunner, G.~Craciun, and M.~Johnston.
\newblock On classes of reaction networks and their associated polynomial
  dynamical systems.
\newblock {\em J. Math. Chem.}, 58(9):1895--1925, 2020.

\bibitem{craciun2020endotactic}
G.~Craciun and A.~Deshpande.
\newblock Endotactic networks and toric differential inclusions.
\newblock {\em SIAM J. Appl. Dyn. Syst.}, 19(3):1798--1822, 2020.

\bibitem{voit2015150}
E.~Voit, H.~Martens, and S.~Omholt.
\newblock 150 years of the mass action law.
\newblock {\em PLOS Comput. Biol.}, 11(1):e1004012, 2015.

\bibitem{guldberg1864studies}
C.~Guldberg and P.~Waage.
\newblock Studies {C}oncerning {A}ffinity.
\newblock {\em CM Forhandlinger: Videnskabs-Selskabet {I} Christiana},
  35(1864):1864, 1864.

\bibitem{gunawardena2003chemical}
J.~Gunawardena.
\newblock Chemical reaction network theory for in-silico biologists.
\newblock {\em Notes available for download at http://vcp. med. harvard.
  edu/papers/crnt. pdf}, 2003.

\bibitem{adleman2014mathematics}
L.~Adleman, M.~Gopalkrishnan, M.~Huang, P.~Moisset, and D.~Reishus.
\newblock On the mathematics of the law of mass action.
\newblock In {\em A Systems Theoretic Approach to Systems and Synthetic Biology
  {I}: Models and System Characterizations}, pages 3--46. Springer, 2014.

\bibitem{horn1972general}
F.~Horn and R.~Jackson.
\newblock General mass action kinetics.
\newblock {\em Arch. Ration. Mech. Anal.}, 47(2):81--116, 1972.

\bibitem{feinberg1977chemical}
M.~Feinberg and F.~Horn.
\newblock Chemical mechanism structure and the coincidence of the
  stoichiometric and kinetic subspaces.
\newblock {\em Arch. Ration. Mech. Anal.}, 66(1):83--97, 1977.

\bibitem{feinberg1995existence}
M.~Feinberg.
\newblock The existence and uniqueness of steady states for a class of chemical
  reaction networks.
\newblock {\em Arch. Ration. Mech. Anal.}, 132(4):311--370, 1995.

\bibitem{boros2019existence}
B.~Boros.
\newblock Existence of positive steady states for weakly reversible mass-action
  systems.
\newblock {\em SIAM J. Math. Anal.}, 51(1):435--449, 2019.

\bibitem{deshpande2014autocatalysis}
A.~Deshpande and M.~Gopalkrishnan.
\newblock Autocatalysis in reaction networks.
\newblock {\em Bull. Math. Biol.}, 76(10):2570--2595, 2014.

\bibitem{hordijk2010autocatalytic}
W.~Hordijk, J.~Hein, and M.~Steel.
\newblock Autocatalytic sets and the origin of life.
\newblock {\em Entropy}, 12(7):1733--1742, 2010.

\bibitem{hordijk2004detecting}
W.~Hordijk and M.~Steel.
\newblock Detecting autocatalytic, self-sustaining sets in chemical reaction
  systems.
\newblock {\em J. Theor. Biol.}, 227(4):451--461, 2004.

\bibitem{hordijk2012structure}
W.~Hordijk, M.~Steel, and S.~Kauffman.
\newblock The structure of autocatalytic sets: Evolvability, enablement, and
  emergence.
\newblock {\em Acta biotheoretica}, 60(4):379--392, 2012.

\bibitem{craciun2022autocatalytic}
G.~Craciun, A.~Deshpande, B.~Joshi, and P.~Yu.
\newblock Autocatalytic recombination systems: A reaction network perspective.
\newblock {\em Math. Biosci.}, 345:108784, 2022.

\bibitem{johnston2013computing}
M.~Johnston, D.~Siegel, and G.~Szederk{\'e}nyi.
\newblock Computing weakly reversible linearly conjugate chemical reaction
  networks with minimal deficiency.
\newblock {\em Mathematical biosciences}, 241(1):88--98, 2013.

\bibitem{szederkenyi2013optimization}
G.~Szederk{\'e}nyi, G.~Lipt{\'a}k, J.~Rudan, and K.~Hangos.
\newblock Optimization-based design of kinetic feedbacks for nonnegative
  polynomial systems.
\newblock In {\em 2013 IEEE 9th International Conference on Computational
  Cybernetics (ICCC)}, pages 67--72. IEEE, 2013.

\bibitem{rudan2014efficiently}
J.~Rudan, G.~Szederk{\'e}nyi, and K.~Hangos.
\newblock Efficiently computing alternative structures of large biochemical
  reaction networks using linear programming.
\newblock {\em MATCH Commun. Math. Comput. Chem}, 71:71--92, 2014.

\bibitem{szederkenyiweak2011finding}
G.~Szederk{\'e}nyi, K.~Hangos, and Z.~Tuza.
\newblock Finding weakly reversible realizations of chemical reaction networks
  using optimization.
\newblock {\em arXiv preprint arXiv:1103.4741}, 2011.

\bibitem{WR_DEF_THM}
G.~Craciun, A.~Deshpande, and J.~Jin.
\newblock Weakly reversible realizations that obey the deficiency one theorem:
  an algorithmic perspective.
\newblock {\em In preparation}.

\end{thebibliography}

\end{document}